\algnewcommand\OUTER{\item[\textbf{OUTER CYCLE}]}%
\algnewcommand\INNER{\item[\textbf{INNER CYCLE:}]}%
\newcolumntype{C}[1]{>{\Centering}m{#1}}
\DeclareMathOperator*{\argmin}{argmin}
\numberwithin{equation}{section}
\newtheorem*{step1}{Step $(i)$}
\newtheorem*{step2}{Step $(ii)$}
\newtheorem*{step3}{Step $(iii)$}
\theoremstyle{plain}
  \newtheorem{theorem}{Theorem}[section]
\theoremstyle{remark}
\theoremstyle{defn}
\newtheorem{proposition}[theorem]{Proposition}
\newtheorem{corollary}[theorem]{Corollary}
\newtheorem{remark}{Remark}[section]
\def\to{\rightarrow}
\def\eps{\varepsilon}
\newcommand{\R}{{\mathbb R}}
\title[]{On the monotone and primal-dual active set schemes for $\ell^p$-type problems, $p \in (0,1]$}
\thanks{This work was  supported by the ERC advanced grant $668998$ (OCLOC) under the EU's H$2020$ research programme. }
\author{Daria Ghilli}
 \address{Daria Ghilli, University of Graz, Institute of Mathematics and Scientific Computing,  Universit{\"a}tsplatz 3, Austria}
\email{daria.ghilli@uni-graz.at}
\author{Karl Kunisch}
\address{Karl Kunisch, University of Graz, Institute of Mathematics and Scientific Computing,  Universit{\"a}tsplatz 3, Austria \\
Johann Radon Institute for Computational and Applied Mathematics (RICAM), Austrian Academy of Sciences, Altenbergerstrasse 69, Linz, Austria\\}
\email{karl.kunisch@uni-graz.at}
\begin{document}

\maketitle

\begin{abstract}
 Nonsmooth nonconvex optimization problems  involving the
 $\ell^p$ quasi-norm, $p \in (0, 1]$, of a   linear map are considered. A monotonically convergent
scheme for a regularized version of the original problem is developped and necessary optimality
conditions for the original prolem in the form of a complementary system amenable for computation are given. Then an algorithm for solving the above mentioned necessary optimality conditions is proposed. It is based on a combination of the monotone
scheme and a primal-dual active set strategy. The performance of the two algorithms is studied by means of a series of numerical tests in different cases, including   optimal control problems, fracture mechanics and microscopy image reconstruction. \\
\vspace{0.2cm}

{\bf keywords:} nonsmooth nonconvex optimization \and active-set method \and monotone algorithm \and optimal control problems \and image reconstruction \and fracture mechanics.\\
\vspace{0.1cm}

{\bf math. subclass:} 49K99, 49M05,65K10
\end{abstract}

\section{Introduction}
We consider the following nonconvex nonsmooth optimization problem
\begin{equation}\label{optprob2mi}
\min_{x \in \R^n}J(x)=\frac{1}{2}|A x-b|_2^2+\beta |\Lambda x|^p_p,
\end{equation}
where $A \in \mathbb{M}^{m\times n}$, $\Lambda \in \mathbb{M}^{r\times n},$ $b \in \R^m, p \in (0,1]$ and $\beta \in \R^+$.
Here 
$$
|x|_p=\left(\sum_{k=1}^n |x_k|^p\right)^{\frac{1}{p}},
$$
which is a norm for $p=1$ and a quasi-norm for $0<p<1$. \\
Optimization of problems as \ref{optprob2mi}  arises frenquently in many applications as an efficient way to extract the essential
features of generalized solutions. In particular, many problems in sparse learning and compressed sensing can be written as \ref{optprob2mi} with $\Lambda=I$, $I$ being the identity (see e.g. \cite{CH09, NIZHZH08}   and the references therein). In image analysis, $\ell^p$-regularisers as in \ref{optprob2mi}   have recently been  proposed    as nonconvex extensions of the total generalized variation (TGV)  regularizer used to reconstruct piecewise smooth functions (e.g. in \cite{OHDOBRPO15, HIWU13}). Also, the use of $\ell^p$-functionals with $p\in (0,1)$ is of particular importance in fracture mechanics (see \cite{PI13}). Recently, sparsity techniques have been  investigated  also by the optimal control community, see e.g. \cite{CCK, HSW, St, KKZ, KI}. The literature on sparsity optimization problems as \ref{optprob2mi} is rapidly increasing, here we mention also \cite{BRLORE15, RAZA12, FOWA10, AFS}. \\
The nonsmoothness and nonconvexity  make the study of problems as \ref{optprob2mi} both an analytical and a numerical challenge. 
 Many numerical techniques have been developped when $\Lambda=I$  (e.g. in \cite{KI, GK, JIJILU14, JIJILURE13}) and attention has recently been given to the case of more general operators, here we mention e.g. \cite{OHDOBRPO15, HIWU13, LI} and we refer to the end of the introduction for further details. However, the presence of the matrix inside the $\ell^p$-term combined with the nonconvexity and nonsmoothness remains one main issue in the developments of numerical schemes for \ref{optprob2mi}.\\
In the present work, we first propose a monotone algorithm to solve a regularized version of \ref{optprob2mi}. The scheme is based on an iterative procedure solving a modified problem where the singularity at the origin is regularized. The convergence of this algorithm and the monotone decay of the cost during the iterations are proved. Then its performance is successfully tested in four different situations,  a time-dependent control problem, a fracture mechanic example for cohesive fracture models, an M-matrix example, and an elliptic control problem. \\
We also focus on the investigation of   suitable necessary optimality conditions for solving the original problem.   Relying on an augmented Lagrangian formulation, optimality conditions  of complementary type are derived. For this purpose we consider the case where $\Lambda$ is a regular matrix, since in the general case the optimality conditions of complementary type are not readily obtainable. 
An active set primal-dual strategy which exploits the particular form of these optimality conditions is developped. A new particular feature of our method  is that at each iteration level the monotone   scheme is used in order to solve the nonlinear equation satisfied by the non zero components. 
The convergence of the active set primal-dual strategy is proved in the case $\Lambda=I$ under a diagonal dominance condition. 
Finally the algorithm was tested on the same time-dependent control problem as the one analysed for the monotone scheme as well as for a miscroscopy image recontruction example. In all the above mentioned examples the matrix inside the $\ell^p$-term appears as a discretized gradient with very different purposes, e.g. as a regularization term in  imaging  and with modelling purposes in fracture mechanics. \\
 Similar type of algorithms  were proposed in \cite{KI} and \cite{GK} for problems as  \ref{optprob2mi}  in case of no matrix inside the $\ell^p$-term and in the infinite dimensional sequence spaces $\ell^p$, with $p \in [0,1]$. Our monotone and primal-dual active set monotone algorithm are inspired  by  the schemes studied respectively in \cite{KI} and \cite{GK}, but with the main novelties that now we treat the case of a regular matrix in the $\ell^p$-term and  we provide diverse numerical tests for both the schemes. Moreover, we prove the convergence of the primal-dual active set strategy. Note also that the monotone scheme has not been tested in the earlier papers.  \\
Let us recall some further literature concerning $\ell^p$, $p \in (0,1]$ sparse regularizers. Iteratively reweighted least-squares algorithms with suitable smoothing of the singularity at the origin were analysed in \cite{CY, LW, LXY}. In \cite{L} a unified convergence analysis was given and new variants were also proposed. An iteratively reweighted $\ell_1$ algorithm (\cite{CWB}) was developped in \cite{CZ} for a class of nonconvex $\ell^2$-$\ell^p$ problems, with $p \in (0,1)$.
A  generalized gradient projection method  for a general class of nonsmooth
non-convex functionals and  a generalized iterated shrinkage algorithm are analysed respectively in \cite{BRLORE15} and in \cite{ZMZFZ}. Also, in \cite{RAZA12} a surrogate functional approach combined with a gradient technique is proposed. 
 However, all the previous works do not investigate the case of a linear operator inside the $\ell^p$-term. \\
Then in \cite{OHDOBRPO15} an iteratively reweighted convex majorization algorithm is proposed for a  class of nonconvex problems including the $\ell^p$, $p \in (0,1]$ regularizer acting on a linear map.  However, an additional assumption of  Lipschitz continuity of the objective functional is required to establish convergence of the whole sequence generated by the algorithm. Nonconvex $TV^p$-models with $p \in (0,1)$ for image restoration are studied in \cite{HIWU13} by a Newton-type solution algorithm for a  regularized version of the original problem. \\
We mention  also \cite{JIJILURE13}, where  a primal-dual active set method is studied for problems as in \ref{optprob2mi} with $\Lambda=I$ for a large class of penalties including also the $\ell^p$, with $p \in [0,1)$. A continuation strategy with the respect to the regularization parameter $\beta$  is proposed and the convergence of the primal-dual active set strategy coupled with the continuation strategy is  proved. However,  in \cite{JIJILURE13}, differently from the present work, the nonlinear problem arising at each iteration level of the active set scheme is not investigated. Moreover, in \cite{JIJILURE13} the matrix $A$ has normalized column vectors, whereas in the present work $A$ is a general matrix.  \\
Finally, in \cite{LI} an alternating direction method of multipliers (ADMM) is studied in the case of a regular matrix inside the $\ell^p$-term, optimality conditions were derived and convergence was proved. Although the ADMM in \cite{LI} is also deduced from an augmented Lagrangian formulation,  we remark that the optimality conditions of that paper are of a different nature than ours  and hence the two approaches cannot readily be compared. We refer to  \ref{remc} for a more detailed explanation.\\
Concerning the general importance of $\ell^p$-functionals with $p \in (0,1)$, numerical experience has shown that their use can  promote sparsity  better than the $\ell^1$-norm (see \cite{6, 18, 45}), e.g. allowing possibly a smaller number of measurements in feature selection and compressed sensing (see also \cite{38, CH09, 8}). Moreover, many works demonstrated empirically that nonconvex regularization terms in total variation-based image restoration provide better edge preservation than the $\ell^1$-regularization (see \cite{35, 38, BR96, RB09}).  Also, the use of nonconvex optimization can be considered from natural image statistics \cite{HM99} and it appears to be more robust with respect to heavy-tailed distributed noise (see e.g. \cite{14}). \\
The paper is organized as follows. In  \ref{sec:optcond} we present our proposed monotone algorithm and we prove its convergence. In  \ref{numericsDeps} we report our numerical results for the four test cases mentioned above. In  \ref{alg} we derive the necessary optimality conditions for \ref{optprob2mi}, we describe our primal-dual active set strategy and  prove  convergence in the case $\Lambda=I$. Finally in  \ref{numericsD} we  report the numerical results obtained by testing the active set monotone algorithm in the two situations mentioned above.

 \section{Existence and monotone algorithm for a regularized problem}\label{sec:optcond}
For convenience of exposition, we recall the  problem under consideration
\begin{equation}\label{optprob2m}
\min_{x \in \R^n}J(x)=\frac{1}{2}|A x-b|_2^2+\beta |\Lambda x|^p_p,
\end{equation}
where $A \in \mathbb{M}^{m\times n}$, $\Lambda \in \mathbb{M}^{r\times n},$ $b \in \R^m, p \in (0,1]$ and $\beta \in \R^+$. \\
Throughout this section we assume 
\begin{equation}\label{asslambdi}
\mbox{Ker}(A)\cap\mbox{Ker}(\Lambda)=\{0\}.
\end{equation} 
The first result is existence for \ref{optprob2m}.
\begin{theorem}\label{2mex}
For any $\beta>0$, there exists a solution to \ref{optprob2m}.
\end{theorem}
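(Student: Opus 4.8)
The plan is to apply the direct method of the calculus of variations, which in this finite-dimensional setting reduces to verifying that $J$ is continuous and coercive and then invoking the Weierstrass theorem. It is worth stressing at the outset that the nonconvexity and the non-differentiability of the quasi-norm at the origin — the genuine difficulties elsewhere in the paper — do not obstruct \emph{existence} here: because the problem is posed on $\R^n$ one never needs weak lower semicontinuity (hence no convexity argument), so plain continuity of $J$ together with coercivity already suffices.

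Continuity is immediate. The scalar map $t \mapsto |t|^p$ is continuous on $\R$ for every $p>0$, so $x \mapsto |\Lambda x|_p^p = \sum_{k} |(\Lambda x)_k|^p$ is a finite sum of continuous functions, while $x \mapsto \frac12|Ax-b|_2^2$ is a quadratic polynomial. Hence $J$ is continuous on $\R^n$, and since $J \geq 0$ the infimum $\inf_{x} J(x)$ is finite.

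The crux is coercivity, i.e. $J(x) \to +\infty$ as $|x|_2 \to \infty$, and this is precisely where assumption \eqref{asslambdi} enters. I would argue by contradiction: suppose there is a sequence $(x_n)$ with $|x_n|_2 \to \infty$ and $J(x_n) \leq C$. Since both terms of $J$ are nonnegative, $|Ax_n-b|_2$ and $|\Lambda x_n|_p^p$ are bounded. Normalizing by $y_n = x_n/|x_n|_2$, we have $|y_n|_2 = 1$, so a subsequence converges to some $\bar y$ with $|\bar y|_2 = 1$. Dividing $|Ax_n-b|_2 \leq C'$ by $|x_n|_2 \to \infty$ gives $A\bar y = 0$. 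For the regularization term, boundedness of $\sum_k |(\Lambda x_n)_k|^p$ forces each $|(\Lambda x_n)_k|$, hence $\Lambda x_n$, to stay bounded; dividing by $|x_n|_2 \to \infty$ then yields $\Lambda \bar y = 0$. Thus $\bar y \in \mbox{Ker}(A)\cap\mbox{Ker}(\Lambda) = \{0\}$, contradicting $|\bar y|_2 = 1$.

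With continuity and coercivity in hand, the conclusion is standard: a minimizing sequence lies in a bounded sublevel set $\{J \leq c\}$, hence admits a convergent subsequence $x_{n_k} \to x^\ast$, and continuity gives $J(x^\ast) = \inf_x J(x)$. The only step demanding care is coercivity, where one must correctly convert the bound on the quasi-norm term into a bound on $\Lambda x_n$ (either through the coordinatewise estimate above, or via the monotonicity $|v|_2 \leq |v|_p$ of $\ell^q$-(quasi)norms in $q$ for $p \in (0,1]$), so that the two kernel conditions can be combined through \eqref{asslambdi}.
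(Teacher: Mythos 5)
Your proposal is correct and follows essentially the same route as the paper: continuity plus coercivity via the Weierstrass theorem, with coercivity proved by contradiction through normalizing a sequence $x_n/|x_n|_2$, extracting an accumulation point $\bar y$ of unit norm, and showing $\bar y \in \mbox{Ker}(A)\cap\mbox{Ker}(\Lambda)$, which contradicts \ref{asslambdi}. The only (immaterial) difference is bookkeeping: the paper passes the bound on $J(x_k)$ through the $p$-homogeneity of $|\Lambda \cdot|_p^p$ after rescaling, while you first bound $\Lambda x_n$ coordinatewise and then divide by $|x_n|_2$; both yield $\Lambda \bar y = 0$.
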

\begin{proof}
Since $J$ is bounded from below, existence will follow from the continuity and coercivity of $J$.  Thus we prove that $J$ is coercive, that is, $|J(x_k)|_2 \to + \infty$ whenever $|x_k|_2\to + \infty$ for some sequence $\{x_k\} \subset \R^n$. By contradiction, suppose that $|x_k|_2\to + \infty$ and $J(x_k)$ is bounded. For each $k$, let $x_k=t_kz_k$ be such that $t_k\geq 0, x_k \in \R^n$ and $|z_k|_2=1. $ Since $t_k \to + \infty$, $p<2$, we have for $k$ sufficiently large 
$$
0\leq \frac{1}{2 t_k^2}|A x_k|_2^2+\beta \frac{1}{t_k^p}|\Lambda x_k|_p^p\leq (\frac{1}{2}+\beta)\frac{1}{t_k^p}\left( |A x_k|_2^2+|\Lambda x_k|_p^p\right) \to 0
$$
and hence
$$
\lim_{k \to + \infty} \frac{1}{2}|Az_k|_2^2+\beta|\Lambda z_k|^p_p=0.
$$
By compactness, the sequence $\{z_k\}$ has an accumulation point $\bar z$ such that $|\bar z|=1$ and $\bar z \in \mbox{Ker}(A)\cap \mbox{Ker}(\Lambda)$, which contradicts \ref{asslambdi}. 
\end{proof}
Following \cite{KI}, in order to overcome the singularity of $(|s|^p)'=\frac{ps}{|s|^{2-p}}$ near $s=0$, we consider for $\eps>0$ the following regularized version of \ref{optprob2m} 
\begin{equation}
\label{optprobeps2m}
\min_{x \in \R^n}J_\eps(x) = \frac{1}{2}|Ax-b|_2^2+\beta \Psi_\eps(|\Lambda x|^2), 
\end{equation}
where for $t \geq 0$
\begin{equation}\label{psieps}
\Psi_{\eps}(t)= \left\{
\begin{array}{ll}
\frac{p}{2}\frac{t}{\eps^{2-p}}+(1-\frac{p}{2})\eps^p \quad &\mbox{for }\,\, 0\leq t \leq \eps^2\\
\noalign{\smallskip}
t^{\frac{p}{2}} \quad & \mbox{ for }\,\, t \geq \eps^2,
\end{array}
\right.\,
\end{equation}
and $\Psi_\eps(|\Lambda x|^2)$ is short for $\sum_{i=1}^\infty \Psi_\eps(|(\Lambda x)_i|^2)$. \\
\begin{remark}
Notice that by the coercivity of the functional $J$ in \ref{optprob2m},  the coercivity of $J_\eps$ and hence existence for \ref{optprobeps2m} follow as well. 
\end{remark}
 The necessary optimality condition for \ref{optprobeps2m} is given by
$$
A^*Ax+\Lambda^*\frac{\beta p}{\max(\eps^{2-p},|\Lambda x|^{2-p})}\Lambda x=A^*b,
$$
where the max-operation is interpreted coordinate-wise.
\\
We set $y=\Lambda x$. Then
\begin{equation}
\label{optcondeps2m}
A^*Ax+\Lambda^*\frac{\beta p}{\max(\eps^{2-p},|y|^{2-p})}y=A^*b.
\end{equation}
 In order to solve \cref{optcondeps2m}, the following iterative procedure is considered:
\begin{equation}\label{iter2m}
A^*Ax^{k+1}+\Lambda^*\frac{\beta p}{\max(\eps^{2-p},|y^{k}|^{2-p})}y^{k+1}=A^*b,
\end{equation}
where we denote $y^{k}=\Lambda x^{k},$ and the second addends are short for the vectors with components $(\Lambda^*)_{li}\frac{\beta p}{\max(\eps^{2-p},|y_i^{k}|^{2-p})}y_i^{k+1}$. \\
We have the following convergence result. 
\begin{theorem}\label{monotdec2m}
 For $\eps>0$, let $\{x_k\}$ be generated by \ref{iter2m}. Then, $J_{\eps}(x_k)$ is strictly monotonically
	decreasing, unless there exists some $k$ such that $x^k = x^{k+1}$ and $x^k$ satisfies the necessary optimality
	condition \ref{optcondeps2m}. Moreover every  cluster point of $x^k$, of which there exists at least one, is a solution of \ref{optcondeps2m}.
\end{theorem}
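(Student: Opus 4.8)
My plan is to recognize the iteration \eqref{iter2m} as a majorization--minimization (iteratively reweighted least squares) scheme driven by the concavity of $\Psi_\eps$, and to extract both the monotone decay and the convergence of cluster points from the resulting quadratic surrogate. First I would record that $\Psi_\eps$ is $C^1$ and concave on $[0,\infty)$: it is affine on $[0,\eps^2]$, while on $(\eps^2,\infty)$ one has $\Psi_\eps''(t)=\tfrac p2(\tfrac p2-1)t^{p/2-2}<0$ because $p<2$, with matching one-sided derivatives at $t=\eps^2$. A short computation gives $\Psi_\eps'(|s|^2)=\tfrac{p}{2\max(\eps^{2-p},|s|^{2-p})}$. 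The tangent-line inequality for the concave $\Psi_\eps$ then yields, for every $x$ and every coordinate $i$,
\[
\Psi_\eps(|(\Lambda x)_i|^2)\le \Psi_\eps(|y_i^k|^2)+\Psi_\eps'(|y_i^k|^2)\bigl(|(\Lambda x)_i|^2-|y_i^k|^2\bigr),
\]
with equality at $x=x^k$. Summing over $i$ and adding the data term produces a surrogate
\[
Q(x;x^k)=\tfrac12|Ax-b|_2^2+\beta\sum_i\Psi_\eps'(|y_i^k|^2)\,|(\Lambda x)_i|^2+c_k,
\]
where $c_k$ is independent of $x$, satisfying $Q(\,\cdot\,;x^k)\ge J_\eps$ everywhere and $Q(x^k;x^k)=J_\eps(x^k)$. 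Writing $D_k=\mathrm{diag}(\Psi_\eps'(|y_i^k|^2))$, the stationarity condition $A^*Ax+2\beta\Lambda^*D_k\Lambda x=A^*b$ of the quadratic $Q(\,\cdot\,;x^k)$ is exactly \eqref{iter2m}; hence $x^{k+1}$ is precisely the minimizer of $Q(\,\cdot\,;x^k)$.

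Next I would use assumption \eqref{asslambdi} for strict convexity. The Hessian $A^*A+2\beta\Lambda^*D_k\Lambda$ has $D_k\succ0$, so its kernel is $\mathrm{Ker}(A)\cap\mathrm{Ker}(\Lambda)=\{0\}$, making $Q(\,\cdot\,;x^k)$ strictly convex with a unique minimizer. The monotonicity follows from the sandwich
\[
J_\eps(x^{k+1})\le Q(x^{k+1};x^k)\le Q(x^k;x^k)=J_\eps(x^k),
\]
in which the first inequality is majorization and the second is minimality. By strict convexity the middle inequality is strict unless $x^{k+1}=x^k$, and $x^{k+1}=x^k$ occurs exactly when $x^k$ solves the fixed-point equation \eqref{optcondeps2m}. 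This settles the first assertion.

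For the cluster-point statement, monotonicity gives $J_\eps(x^k)\le J_\eps(x^0)$ for all $k$, so by coercivity of $J_\eps$ (from the preceding remark) the iterates remain in a bounded sublevel set and at least one cluster point exists. To identify it, I would first prove $|x^{k+1}-x^k|_2\to0$. Boundedness of $\{x^k\}$ bounds $\{y^k\}$, say $|y_i^k|\le M$, so the diagonal of $D_k$ is bounded below by $c:=\tfrac{p}{2\max(\eps^{2-p},M^{2-p})}>0$, giving a uniform estimate $A^*A+2\beta\Lambda^*D_k\Lambda\succeq A^*A+2\beta c\,\Lambda^*\Lambda\succeq\mu I$ with $\mu>0$ independent of $k$ (again invoking \eqref{asslambdi}). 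Strong convexity with modulus $\mu$ together with minimality of $x^{k+1}$ yields $J_\eps(x^k)-J_\eps(x^{k+1})\ge Q(x^k;x^k)-Q(x^{k+1};x^k)\ge\tfrac{\mu}{2}|x^k-x^{k+1}|_2^2$; since the monotone bounded sequence $J_\eps(x^k)$ converges, the left side tends to $0$ and hence $|x^{k+1}-x^k|_2\to0$.

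Finally, for a convergent subsequence $x^{k_j}\to\bar x$ one also has $x^{k_j+1}\to\bar x$ and $y^{k_j},y^{k_j+1}\to\bar y:=\Lambda\bar x$; since $y\mapsto\tfrac{\beta p}{\max(\eps^{2-p},|y|^{2-p})}$ is continuous, passing to the limit in \eqref{iter2m} gives $A^*A\bar x+\Lambda^*\tfrac{\beta p}{\max(\eps^{2-p},|\bar y|^{2-p})}\bar y=A^*b$, i.e. $\bar x$ solves \eqref{optcondeps2m}. I expect this last passage to the limit to be the main obstacle: it rests on the two quantitative ingredients above, namely the $k$-uniform lower bound $\mu>0$ on the surrogate Hessians (which forces $|x^{k+1}-x^k|_2\to0$) and the continuity of the reweighting map, both of which in turn rely on the boundedness of $\{x^k\}$ and on the regularization at level $\eps$ that keeps the denominators away from zero.
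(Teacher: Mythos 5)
Your proof is correct and is essentially the paper's argument in majorization--minimization dress: the tangent-line inequality for the concave $\Psi_\eps$ is exactly the inequality the paper uses (its \ref{conc}), and your sandwich $J_\eps(x^{k+1})\le Q(x^{k+1};x^k)\le Q(x^k;x^k)=J_\eps(x^k)$ reproduces the decay estimate \ref{442m} that the paper obtains by multiplying \ref{iter2m} by $x^{k+1}-x^k$ and using concavity. The only cosmetic differences are that you deduce $|x^{k+1}-x^k|_2\to 0$ from uniform strong convexity of the surrogates, whereas the paper sums its decay estimate to get $\sum_k |A(x^{k+1}-x^k)|_2^2+|y^{k+1}-y^k|_2^2<\infty$ and then invokes \ref{asslambdi}, and that you additionally record well-posedness of each iterate as the unique minimizer of a strictly convex quadratic, a point the paper leaves implicit.
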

\begin{proof}
The proof follows similar arguments to that of Theorem $4.1$, \cite{KI}. Multiplying \ref{iter2m} by $x^{k+1}-x^k$, we get
\begin{eqnarray*}
\frac{1}{2}|A x^{k+1}|^2-\frac{1}{2}|A x^k|^2+\frac{1}{2}|A (x^{k+1}-x^{k})|^2&+&\beta p\left(\frac{1}{\max(\eps^{2-p},|y^{k}|^{2-p})}y^{k+1},y^{k+1}-y^{k}\right)\\ &=&(A^*b,x^{k+1}-x^k).
\end{eqnarray*}
Note that 
\begin{equation}\label{mon1}
\left(\frac{1}{\max(\eps^{2-p},|y^{k}|^{2-p})}y^{k+1},y^{k+1}-y^k\right)=\frac{1}{2}\sum_{i=1}^n\frac{\left(|y_i^{k+1}|^2-|y_i^{k}|^2+|y_i^{k+1}-y_i^{k}|^2\right)}{\max(\eps^{2-p},|y_i^{k}|^{2-p})}
\end{equation}
and
\begin{equation}\label{mon2}
\frac{1}{\max(\eps^{2-p},|y_i^{k}|^{2-p})}\frac{p}{2}(|y_i^{k+1}|^2-|y_i^{k}|^2)=\Psi_\eps'(|y_i^{k}|^2)(|y_i^{k+1}|^2-|y_i^{k}|^2).
\end{equation}
Since  $t \rightarrow \Psi_\eps(t)$ is concave, we have 
\begin{equation}\label{conc}
\Psi_\eps(|y_i^{k+1}|^2)-\Psi_\eps(|y_i^{k}|^2)-\frac{1}{\max(\eps^{2-p},|y_i^{k}|^{2-p})}\frac{p}{2}(|y_i^{k+1}|^2-|y_i^{k}|^2) \leq 0.
\end{equation}
Then, using \ref{mon1}, \ref{mon2}, \ref{conc}, we get
\begin{equation}\label{442m}
J_\eps(x^{k+1}) +\frac{1}{2}|A(x^{k+1}-x^k)|_2^2+\frac{1}{2}\sum_{i=1}^n\frac{\beta p}{\max(\eps^{2-p},|y_i^{k}|^{2-p})}|y_i^{k+1}-y_i^{k}|^2 \leq J_\eps(x^k).
\end{equation}
From \ref{442m}  it follows that $\{x^k\}_{k=1}^\infty$ and thus $\{y^{k}\}_{k=1}^\infty$ are bounded. Then, from \ref{442m}, there exists a constant $\kappa>0$ such that 
\begin{equation}\label{45}
J_\eps(x^{k+1}) +\frac{1}{2}|A(x^{k+1}-x^k)|_2^2+\kappa|y^{k+1}-y^{k}|_2^2  \leq J_\eps(x^k),
\end{equation}
from which we conclude the first  part of the theorem. 
From \ref{45}, we conclude that 
\begin{equation}\label{46}
\sum_{k=0}^\infty |A(x^{k+1}-x^k)|_2^2+|y^{k+1}-y^{k}|_2^2  <\infty.
\end{equation}
Since $\{x^k\}_{k=1}^\infty$ is bounded, there exists a subsequence and $\bar x \in \R^n$ such that $x^{k_l}\rightarrow \bar x$. By \ref{46} and \ref{asslambdi} we have that $x^{k_l+1} \rightarrow \bar x$.
Then, passing to the limit with respect to $k$ in  \ref{iter2m}, we get that $\bar x$ is a solution to \ref{iter2m}.
\end{proof}
In the following proposition we establish the convergence of \ref{optprobeps2m} to \ref{optprob2m} as $\eps$  goes to zero.
\begin{proposition}
Let $\{x_\eps\}_{\eps>0}$ be solution to \ref{optprobeps2m}. Then any cluster point of  $\{x_\eps\}_{\eps>0}$, of which there exists al least one,  is a solution of \ref{optprob2m}.
\end{proposition}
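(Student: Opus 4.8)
The plan is to reduce the statement to a single uniform-in-$\eps$ comparison between the regularized functional $J_\eps$ and the original functional $J$, and then run the standard argument for convergence of minimizers. The cornerstone is the observation that, componentwise, $\Psi_\eps$ from \eqref{psieps} is exactly the tangent line to the concave map $t \mapsto t^{p/2}$ at the point $t=\eps^2$ on the interval $[0,\eps^2]$, while it coincides with $t^{p/2}$ for $t \geq \eps^2$ (one checks that value and derivative match at $t=\eps^2$).

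First I would record the two-sided bound. Since $t \mapsto t^{p/2}$ is concave (recall $p\leq 1 <2$), its tangent at $\eps^2$ lies above the graph, so $\Psi_\eps(t) \geq t^{p/2}$ for all $t\geq 0$, with equality once $t\geq \eps^2$; a short computation shows the gap $\Psi_\eps(t)-t^{p/2}$ is decreasing on $[0,\eps^2]$ and hence maximal at $t=0$, giving $0 \leq \Psi_\eps(t)-t^{p/2} \leq (1-\tfrac{p}{2})\eps^p$. Summing over the $r$ components of $\Lambda x$ yields, for every $x \in \R^n$,
\begin{equation*}
J(x) \leq J_\eps(x) \leq J(x) + \beta r\left(1-\tfrac{p}{2}\right)\eps^p,
\end{equation*}
where $r$ is the number of rows of $\Lambda$. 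This one estimate carries essentially all the information needed.

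Next I would establish existence of a cluster point. Let $x^*$ be a minimizer of $J$, which exists by Theorem \ref{2mex}. Using minimality of $x_\eps$ for $J_\eps$ together with the sandwich above, for $0<\eps\leq 1$,
\begin{equation*}
J(x_\eps) \leq J_\eps(x_\eps) \leq J_\eps(x^*) \leq J(x^*) + \beta r\left(1-\tfrac{p}{2}\right),
\end{equation*}
so the whole family $\{x_\eps\}_{0<\eps\leq 1}$ sits in a fixed sublevel set of $J$. Since $J$ is coercive (established in the proof of Theorem \ref{2mex}), this family is bounded and therefore admits at least one cluster point.

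Finally I would pass to the limit. Let $\bar x$ be a cluster point, say $x_{\eps_n}\to\bar x$ with $\eps_n\to 0$. For an arbitrary competitor $x\in\R^n$, minimality and the upper bound give
\begin{equation*}
J(x_{\eps_n}) \leq J_{\eps_n}(x_{\eps_n}) \leq J_{\eps_n}(x) \leq J(x) + \beta r\left(1-\tfrac{p}{2}\right)\eps_n^p.
\end{equation*}
As $J$ is continuous the left side converges to $J(\bar x)$, while the right side converges to $J(x)$, so $J(\bar x)\leq J(x)$ for all $x$, i.e. $\bar x$ solves \eqref{optprob2m}. Everything after the estimate is soft, using only continuity and coercivity of $J$, so the main work lies in the approximation bound $\Psi_\eps(t)-t^{p/2}\leq(1-\tfrac{p}{2})\eps^p$ — in particular the fact that its constant is independent of $\eps$, which is precisely what forces the limit to be a genuine minimizer of the unregularized problem.
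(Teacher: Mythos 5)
Your proof is correct and follows essentially the same route as the paper's: boundedness of the family $\{x_\eps\}$ via coercivity, then passage to the limit in the minimality inequality. The only difference is one of detail: your explicit sandwich estimate $0 \leq \Psi_\eps(t) - t^{p/2} \leq \bigl(1-\tfrac{p}{2}\bigr)\eps^p$ is precisely what the paper compresses into the phrase ``using the definition of $\Psi_\eps$,'' so your write-up is a fleshed-out version of the same argument.
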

\begin{proof}
From the coercivity of $J_\eps$, we have that $\{x_\eps\}_{\eps}$ is bounded for $\eps$ small and then there exist a subsequence  and $\bar x \in \R^n$ such that $x_{\eps_l} \rightarrow \bar x$. Since $\{x_\eps\}_{\eps}$ solves \ref{optprobeps2m}, by letting  $\eps\to 0$  and using the definition of $\Psi_\eps$, we easily get that  $\bar x$ is a solution of \ref{optprob2m}.
\end{proof}

\section{Monotone algorithm: numerical results}\label{numericsDeps}
The focus of this section is to investigate the performance of the monotone algorithm in practice. For this purpose we choose four problems with matrices $A$ of very different structure: a time-dependent optimal control problem, a fracture mechanics example,  the $M$ matrix  and a stationary optimal control problem.  The latter two problems are studied for the two matrix case. \\
\subsection{The numerical scheme}
For further references it is convenient to recall the algorithm in the following form (see \textbf{Algorithm $1$}).
Note that a  continuation strategy with respect  to the parameter $\eps$ is performed. The initialization and range of $\eps$-values is described for each class of problems below. \\
The algorithm stops when the $\ell^\infty$-norm of the residue of \ref{optcondeps2m} is $O(10^{-3})$ in all the examples, except the fracture problem, where it is $O(10^{-15})$. At this instance, the $\ell^2$-residue is typically much smaller. Thus, we find an approximate solution of the $\eps$-reguralized optimality condition \ref{optcondeps2m}.
The  initialization $x^0$  is chosen in the following way
\begin{equation}\label{initmoneps}
x^0=(A^*A+2\beta \Lambda^*\Lambda)^{-1}A^*b,
\end{equation}
that is, $x^0$ is chosen as the solution of the problem \ref{optprob2m} where the $\ell^p$-term is replaced by the $\ell^2$-norm. 
Our numerical experience shows that for some values of $\beta$ the previous initialization is not suitable, that is, the residue obtained is too big.  In order to get a lower residue, we successfully tested a continuation strategy with respect to increasing $\beta$-values.
\begin{algorithm}[h!]
	\caption{Monotone algorithm + $\eps$-continuation strategy}
	\begin{algorithmic}[1]
		\STATE Initialize $\eps^0$, ‎$x^0$ and set $y^{0}=\Lambda x^0$. Set $k=0$;
		\REPEAT
		\STATE  Solve for $x^{k+1}$ 
		$$
		A^*Ax^{k+1}+\Lambda^*\frac{\beta p}{\max(\eps^{2-p},|y^{k}|^{2-p})}\Lambda x^{k+1}=A^*b.
		$$
		\STATE Set $y^{k+1}=\Lambda x^{k+1}$.
		\STATE Set $k=k+1$.
		\UNTIL{the stopping criterion is fulfilled}.
		\STATE  Reduce $\eps$ and repeat 2.
	\end{algorithmic}
\end{algorithm}

In the presentation of our numerical results, the total number of iterations shown in the tables  takes into account the continuation strategy with respect to $\eps$. However, it does not take into account the continuation with respect to $\beta$.
We remark that in all the experiments presented in the following sections,  the value of the functional for each iterations was checked to be monotonically decreasing accordingly to Theorem \ref{monotdec2m}.\\
The following notation will hold for the rest of the paper.  For $x \in \R^n$ we will denote  $|x|_0=\#\{i\, :\, |x_i|> 10^{-10}\},$  $|x|_0^c=\#\{i\, :\, |x_i|\leq 10^{-10}\},$ and by $|x|_2$  the euclidean norm of $x$. 

 \subsection{Time-dependent control problem}\label{controlone}
We consider the linear control system
$$
\frac{d}{dt} y(t)=\mathcal{A} y(t)+B u(t), \quad y(0)=0,
$$
that is,
\begin{equation}\label{LCSfinalstate}
y(T)=\int_0^T e^{\mathcal{A}(T-s)} B u(s) ds,
\end{equation}
where the linear closed operator $\mathcal{A}$ generates a $C_0$-semigroup $e^{\mathcal{A}t}$, $t\geq 0$ on the state space $X$. More specifically, we consider the  one dimensional controlled heat equation for $y=y(t,x)$:
\begin{equation}\label{actionscontrol}
y_t=y_{xx}+b_1(x)u_1(t)+b_2(x)u_2(t), \quad x \in (0,1),
\end{equation}
with homogeneous boundary conditions $y(t,0)=y(t,1)=0$ and thus $X=L^2(0,1)$. The differential operator $\mathcal{A}y=y_{xx}$ is discretized in space by the second order finite difference approximation with $n=49$ interior spatial nodes ($\Delta x=\frac{1}{50}$). We use two time dependent controls $\overrightarrow u=(u_1,u_2)$ with corresponding spatial control distributions $b_i$ chosen as step functions:
$$
b_1(x)=\chi_{(.2,.3)}, \quad b_2(x)=\chi_{(.6,.7)}.
$$
The control problem consists in finding the control function $\overrightarrow u$ that steers the state $y(0)=0$ to a neighborhood of the desired state $y_d$ at the terminal time $T=1$. We discretize the problem in time by the mid-point rule, i.e.
\begin{equation}\label{midpoint}
A \overrightarrow  u=\sum_{k=1}^m e^{\mathcal{A}\left(T-t_{k}-\frac{\Delta t}{2}\right)} (B \overrightarrow u)_k \Delta t,
\end{equation}
where $\overrightarrow u=(u_1^1,\cdots, u_1^m,u_2^1,\cdots u_2^m)$ is a discretized control vector whose coordinates represent the values at the mid-point of the intervals $(t_k,t_{k+1})$. Note that in \ref{midpoint} we denote by $B$ a suitable rearrangement of the matrix $B$ in \ref{LCSfinalstate} with some abuse of notation. A uniform step-size $\Delta t=\frac{1}{50}$ ($m=50$) is utilized. The solution of the control problem is based on the sparsity formulation \ref{optprob2m}, where $\Lambda$ is the backward difference operator acting independently on each component of the control, that is, $\Lambda=m(I_2\otimes D)$ where $I_2$ is the $2\times2$ identity matrix and $D\, :\, \R^m \rightarrow \R^m$ is as follows
\begin{equation}\label{Dcontrol}
D=\left(\begin{array}{ccccc}
1& 0& 0& \cdots& 0\\ -1& 1& 0& \cdots& 0\\\vspace{0.2cm}\\ 0& \cdots& 0&-1&1\
\end{array}\right).
\end{equation}
 Also,
 $b$ in \ref{optprob2m} is the discretized target function chosen as the Gaussian distribution $y_d(x)=0.4\,\mbox{exp}(-70(x-.7)^2))$ centered at $x=.7$.
 That is, we apply our algorithm for the discretized optimal control problem in time and space where $x$ from \ref{optprob2m} is the discretized control vector $u \in \R^{2m}$ which is mapped by $A$ to the discretized output $y$ at time $1$ by means of \ref{midpoint}. Moreover $b$ from \ref{optprob2m} is the discretized state $y_d$ with respect to the spatial grid $\Delta x$. The parameter $\eps$ was initialized with $10^{-3}$ and decreased down to $10^{-8}$.
Note that, since the second control distribution is well within the support of the desired state $y_d$ we expect the authority of this control to be stronger than that of the first one, which is away from the target. \\
In Table $1$  we report the results of our tests for $p=.5$ for $\beta$ incrementally increasing by factor of $10$ from $10^{-3}$ to $1$. We report only the values for the second control $u_2$ since the first control $u_1$ is always zero. In the third row we see that $(|Du_2|_0)^c$ increases with $\beta$, consistent with our expectation.
  Note also that  the quantity $|Du_2|_p^p$ decreases for $\beta$ increasing. \\
For any $i=1,\cdots, m$, we say that $i$ is a  singular component of the vector $D u_2$ if $i \in\{i \, :\, |(Du_2)_i|<\eps\}$. In particular, note that the singular components are the ones where the $\eps$-regularization is most influential. In the sixth row of Table $1$ we show their number at the end of the $\eps$-path following scheme (denoted by $Sp$) and we observe  that it concides with the quantity $|Du_2|_0^c$, which is reassuring the validity of our $\eps$-strategy. \\
The algorithm was also tested for values of $p$ near to $1$, e.g. for $p=.9$. The results obtained shows a less piecewise constant behaviour of the solution with respect to the ones for $p=.5$.
Finally, we remark that if we change the initialization \ref{initmoneps}, the method converges to the same solution with no remarkable modifications in the number of iterations.
\begin{table}[tbhp]\caption{Sparsity in a time-dependent control problem, $ p=.5$, mesh size $h=\frac{1}{50}$. Results obtained by \textbf{Algorithm $1$}.}
	\centering
	\begin{tabular}{|l|c|c|c|c|}
		\hline
		{\bf $\beta$ }  &$10^{-3}$&$10^{-2}$&$10^{-1}$&$1$\\
		\hline
		no. of iterates &630&635&29&19\\
		\hline
		{\bf $|Du_2|_0^c$ } & 97&99&100&100 \\
		\hline
		{\bf $|Du_2|^p_p$ }& 158&16.7&$6*10^{-5}$&$10^{-4}$ \\
		\hline
		$\mbox{ Residue }$ &  $3*10^{-3}$ &$2*10^{-3}$ &$1.2*10^{-3}$&$2.5*10^{-10}$\\
		\hline
		$\mbox{Sp}$ &97&99&100&100\\
		\hline
	\end{tabular}
\end{table}

\subsection{Quasi-static evolution of cohesive fracture models}\label{fract}
In this section we focus on a modelling problem for  quasi-static evolutions of cohesive fractures. This kind of problems require the minimization of an energy functional, which has two components: the elastic energy and the cohesive fracture energy. The underlying idea is that the fracture energy is released gradually with the growth of the crack opening.  The cohesive energy, denoted by $\theta$, is assumed to be a monotonic non-decreasing  function of the jump amplitude of the displacement,  denoted by $\llbracket u \rrbracket$. Cohesive energies were introduced independently by Dugdale  \cite{DG} and Barenblatt  \cite{BA}, we refer to \cite{PI13} for more details on the models.  Let us just remark that the two models differ mainly in the evolution of  the derivative $\theta'(\llbracket u\rrbracket)$, that is, the \textit{bridging force}, across a crack amplitude $\llbracket u \rrbracket$. In Dugdale's model this force keeps a constant value up to a critical value of the crack opening and then drops to zero. In Barenblatt's model, the dependence of the force on $\llbracket u \rrbracket$ is continuous and decreasing. \\
In this section we test the $\ell^p$-term $0<p<1$ as a  model for the cohesive energy. In particular,  the cohesive energy is not differentiable in zero and the bridging force goes to infinity when the jump amplitude goes to zero. Note also that the bridging force goes to zero when the jump amplitude goes to infinity.\\
Let us  introduce all the elements that we need for the rest of the section. We consider the one-dimensional domain $\Omega=[0,2l]$ with $l>0$ and we denote by $u\,:\, \Omega \rightarrow \R$ the displacement function. The deformation of the domain is given by an external force which we express in terms of an external displacement function $g\,:\,\Omega\times [0,T] \rightarrow \R$. We require that the displacement $u$ coincides with the external deformation, that is
$$
u|_{\partial \Omega}=g|_{\partial \Omega}.
$$
We denote by $\Gamma$ the point of the (potential) crack, which we chose as the midpoint $\Gamma=l$ and  by $\theta(\llbracket u \rrbracket)_\Gamma$ the value of the cohesive energy  $\theta$ on the crack amplitude of the displacement $\llbracket u \rrbracket$ on $\Gamma$. Since we are in a quasi-static setting, we introduce the time discretization $0=t_0<t_1< \cdots <t_T=T$ and look for the equilibrium configurations which are minimizers of the energy of the system. This means that  for each $i \in \{0, \cdots, T\}$ we need to minimize the energy of the system
$$
J(u)=\frac{1}{2}\int_{\Omega\backslash \Gamma}|\nabla u|^2 dx +\beta \theta(\llbracket u \rrbracket)_\Gamma
$$ 
with respect to a given boundary datum $g$:
$$
u^*\in \argmin_{u=g(t_i) \mbox{ on } \partial \Omega} J(u),
$$
where $\beta>0$ in $J(u)$ is a  material parameter.
In particular, we consider the following type of cohesive energy
$$
\theta(\llbracket u \rrbracket)=|\llbracket u \rrbracket|^p,
$$
for $p \in (0,1)$.
 We divide $\Omega$ into $2N$ intervals  and  approximate the displacement function with a function $u_h$ that is piecewise linear on $\Omega\backslash \Gamma$ and has two degrees of freedom on $\Gamma$ to represent correctly the two lips of the fracture, denoting with $u_{N}^{-}$ the degree on $[0,l]$ and $u_{N}^{+}$ the one on $[l,1]$. We discretize the problem in the following way
\begin{equation}\label{fractdiscr}
J_h(u_h)=\frac{1}{2} \sum_{i=1}^{2N} \frac{N}{l} |u_i -u_{i-1}|^2+\beta |\llbracket u_N \rrbracket|^p,
\end{equation}
where if $i\leq N$ we identify $u_N=u_N^-$ while for $i>N, u_N=u_N^+$. We remark that the jump of the displacement is not taken into account in the sum, and the gradient of $u$ is approximated with finite difference of first order.
 The Dirichlet condition is applied on $\partial \Omega=\{0,2l\}$ and the external displacememt is chosen as
 $$
 u(0,t)=0, \quad u(2l,t)=2lt.
 $$
 To enforce the boundary condition in the minimization process, we add it to the energy functional as a penalization term. Hence, we solve the following  unconstrained minimization problem
 \begin{equation}\label{ff1}
 \min \frac{N}{2l}|A u_h -g|_2^2+\beta | \llbracket u_N \rrbracket|^p,
 \end{equation}
 where the operator $A \in \R^{(2N+1)\times (2N+1)}$ is given by
 $$
A=\left[\begin{array}{c}
\bar D\\
0\, \,\cdots\,\, 0 \, \,\gamma\end{array} \right].
$$
Here $\bar D \in \R^{2N\times (2N+1)}$  denotes the backward finite difference operator  $D	\, : \,\R^{2N+1} \to \R^{2N+1}$
 without the $N+1$ row, where $D$ is defined in \ref{Dcontrol}.
  Moreover
$g \in \R^{2N+1}$ in \ref{ff1} is given by $g=(0,\cdots, \gamma 2lt_i)'$ and $\gamma$ is the penalization parameter. To compute the jump between the two lips of the fracture, we introduce the operator $D_f:\R^{2N+1} \to \R$ defined as $D_f=(0,\cdots, -1,1,0,\cdots,0)$  where $-1$ and $1$ are respectively in the $N$ and $N+1$ positions.
Then we write the functional \ref{ff1} as follows
 \begin{equation}\label{ff2}
 \min \frac{N}{2l}|A u_h -g|_2^2+\beta | D_fu|^p,
 \end{equation}
Note that    $\mbox{KerA }=0$, hence assumption \ref{asslambdi} is satisfied and existence  of a minimizer for \ref{ff2} is  guaranteed.\\
Our numerical experiments were conducted with a discretization in $2N$ intervals with $N=100$ and a prescribed potential crack $\Gamma=0.5$. The time step in the time discretization of $[0,T]$ with $T=3$ is set to $dt=0.01$. The parameters of the energy functional $J_h(u_h)$ are set to $\beta=1, \gamma=50$. The parameter $\eps$  is  decreased from $10^{-1}$  to $10^{-12}$. \\
In  Figures $1$ we report three time frames to represent the evolutions of the crack obtained with  \textbf{Algorithm $1$} for two different values of $p$, that is, $p=.01, .1$ respectively. Each time frame consists of  three different time steps $(t_1, t_2, t_3)$, where $t_2, t_3$ are chosen as the first instant where the prefacture and the fracture appear.
The evolution presents the three phases that we expect from a cohesive fracture model:
\begin{itemize}
\item \textit{Pure elastic deformation}: in this case the jump amplitude is zero and the gradient of the displacement is constant in $\Omega \backslash \Gamma$;
\item \textit{Prefracture}: the two lips of the fracture do not touch each other, but they are not free to move. The elastic energy is still present.
\item \textit{Fracture}: the two parts are free to move. In this final phase the gradient of the displacement (and then the elastic energy) is zero.
\end{itemize}
Moreover  we remark that the  formation  of the crack is anticipated for smaller values of $p$. As we see in Figure $1$, for $p=.01$  prefracture and fracture  are reached at $t=.3$ and $t=1.5$ respectively. As $p$ is increased to $p=.1$,  prefracture and fracture occur at $t=1$ and $t=3$ respectively.
Finally we remark that in our  experiments the residue is  $O(10^{-16})$ and the number of iterations is small, e.g. $12, 15$ for $p=.01, .1$ respectively.

\begin{figure}[h!]
\centering
\subfloat[$t=0.2$]{\includegraphics[height=5.2cm, width=3cm]{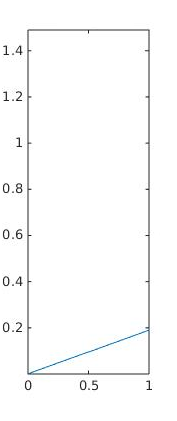}}
\subfloat[$t=0.3$]{\includegraphics[height=5.2cm, width=3cm]{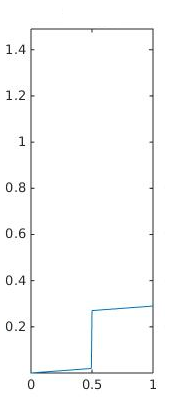}}
\subfloat[$t=1.5$]{\includegraphics[height=5.2cm, width=3cm]{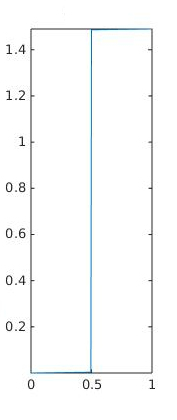}}
%
\hspace{0.3cm}
\subfloat[$t=0.9$]
{
\includegraphics[height=5.2cm, width=3cm]{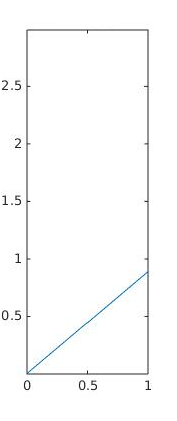}
}
\subfloat[$t=1$]
{
\includegraphics[height=5.2cm, width=3cm]{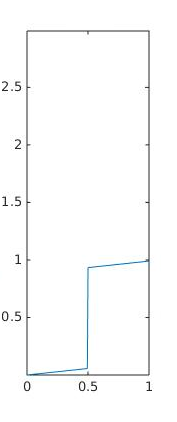}
}
\subfloat[$t=3$]
{
\includegraphics[height=5.2cm, width=3cm]{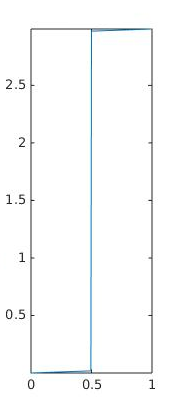}
}
\caption{Three time-step evolution of the displacement for  $p=.01$, $t = .2, .3, 1.5$  (up), $p=.1$, $t=.9, 1, 3$ (down). Results obtained by \textbf{Algorithm $1$}.}
\end{figure}

\subsection{M-matrix}\label{2MM}
We consider
\begin{equation}
\label{optprobM2M}
\min_{x \in \R^{n^2}}\frac{1}{2}|A x-b|_2^2+\beta |\Lambda x|^p_p,
\end{equation}
where $A$ is the backward finite difference gradient
\begin{equation}\label{A}
A=(n+1)\left(\begin{array}{c} G_1\\G_2\end{array}\right),
\end{equation}
with $G_1 \in \R^{n(n+1)\times n^2}, G_2 \in \R^{n(n+1)\times n^2}$ given by
$$
G_1=I \otimes D, \quad G_2=D \otimes I.
$$
Here $I$ is the $n\times n$ identity matrix, $\otimes$ denotes the tensor product, and  $D\in \R^{(n+1)\times n}$ is given by
\begin{equation}\label{D}
D=\left(\begin{array}{ccccc}
1& 0& 0& \cdots& 0\\ -1& 1& 0& \cdots& 0\\ \vspace{0.2cm}\\ 0& \cdots& 0&-1&1\\0&\cdots&0&0&-1 
\end{array}\right).
\end{equation}
Then $A^* A$ is an $M$ matrix coinciding with the $5$-point star discretization on a uniform mesh on a square of the Laplacian with Dirichlet boundary conditions.  Note that \ref{optprobM2M} can be equivalently expressed as
\begin{equation}
\label{optprob2}
\min_{x \in \R^{n\times n}}\frac{1}{2}|A x|_2^2-(x,f)+\beta |\Lambda x|^p_p,
\end{equation}
where $f=A^* b$. If $\beta=0$ this is the discretized variational form of the elliptic equation
\begin{equation}
\label{elleq}
-\Delta y=f \mbox{ in } \Omega, \quad y=0 \mbox{ on } \partial \Omega.
\end{equation}
For $\beta>0$ the variational problem \ref{optprob2} gives a solution piecewise constant enhancing behaviour.\\
Our tests were conducted with $f$  chosen as discretization of $f=10 x_1\mbox{sin}(5x_2) \mbox{cos}(7 x_1)$ and 
$$
\Lambda=(n+1)\left(\begin{array}{c} D_1\\D_2\end{array}\right),
$$
where $D_1 \in \R^{n^2\times n^2}, D_2 \in \R^{n^2\times n^2}$ are defined as follows
\begin{equation}\label{d1d2}
D_1=I\otimes D, \quad  D_2=D\otimes I,
\end{equation}
 and $D \in \R^{n\times n}$ is  the backward difference operator defined in \ref{D} without the $n+1$-row.
 The parameter $\eps$ was initialized with $10^{-1}$ and decreased  to $10^{-6}$.\\
In Tables $2$ we show the performance of \textbf{Algorithm $1$} for  $p=.1$,  $h=1/64$ as mesh size and  $\beta$ incrementally increasing by factor of $10$ from $10^{-4}$ to $10$. In Figure $2$ we report the graphics of the solutions for different values of $\beta$ between $.01$ and $.3$ where most changes occur in the graphics.\\
We observe significant differences in the results with respect to different values of $\beta$. Consistently with our expectations, $|\Lambda x|^c_0$ increases with $\beta$ (see the third row of Table $2$). For example, for $\beta=1, 10$, we have $|\Lambda x|^c_0=7938$, or equivalently, $|\Lambda x|_0=0$, that is, the solution to \ref{optprob2} is  constant. Moreover the fourth row shows that  $|\Lambda x|^p_p$  decreases when $\beta$ increases.\\
The fifth row exhibits the $\ell^\infty$ norm of the residue, which is $O(10^{-4})$ for  all the considered $\beta$. 
We remark that  the number of iterations is sensitive with respect to $\beta$, in particular it increases when $\beta$ is increasing from $10^{-4}$ to $10^{-1}$ and then it decreases significantly for $\beta=1,10$.\\
The algorithm  was also tested for different values of $p$. The results obtained show dependence on $p$, in particular $|\Lambda x|^c_0$  decreases  as $p$ is increasing.  For example, for $p=.5$ and $\beta=.1$ we have $|\Lambda x|^c_0=188, |\Lambda x|^p_p=528$.\\
In the sixth row of Table $2$ we show the number of singular components of the vector $\Lambda x$ at the end of the $\eps$-path following scheme, that is, $Sp:=\#\{ i\,\, |\,\, |(\Lambda x)_i|<\eps\}$. For most values of $\beta$, we note that  $Sp$ is comparable to  $|\Lambda x|_0^c$. This again confirms that the $\eps$-strategy is effective.\\
Finally, we remark that if we modify the initialization \eqref{initmoneps}, the method converges to the same solution with no remarkable modifications in the number of iterations, which is a sign for the global nature of the algorithm.

\begin{table}[tbhp]
	\caption{$M$-matrix example, $\Lambda=(n+1)[D_1;D_2],  p=.1$, mesh size $h=\frac{1}{64}$. Results obtained by \textbf{Algorithm $1$}.} 
	\centering
	\begin{tabular}{|l|c|c|c|c|c|c|}
		\hline\noalign{\smallskip}
		{\bf $\beta$ }   &$10^{-4}$& $10^{-3}$ &$10^{-2}$&$10^{-1}$ & $1$ \\		\hline
		no. of iterates  & 1701&2469& 3929&4254& 14   \\
		\noalign{\smallskip}\hline\noalign{\smallskip}
		{\bf $|\Lambda x|^c_0$ } &16 &103& 791&5384& 7938\\
		{\bf $|\Lambda x|^p_p$ }& $6*10^3$&$5.8*10^3$&$ 5* 10^{3}$&$2.4*10^3$&584\\
		$\mbox{ Residue }$ & $2.7*10^{-7}$ & $5.5*10^{-6}$ & $ 9*10^{-5}$ &  $9*10^{-4}$  & $3*10^{-12}$\\
		$Sp$  & 247&696&2097& 5599& 7938\\
		\noalign{\smallskip}\hline
	\end{tabular}
\end{table}

\begin{figure}[h!]
\centering
\subfloat[$\beta=0.01$]
{
\includegraphics[height=4cm, width=3.6cm]{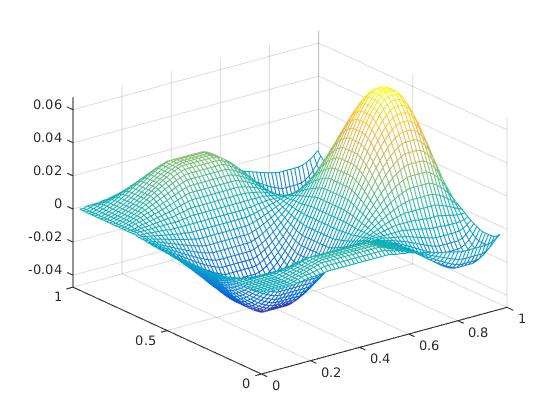}
}
\subfloat[$\beta=0.05$]
{
\includegraphics[height=4cm, width=3.6cm]{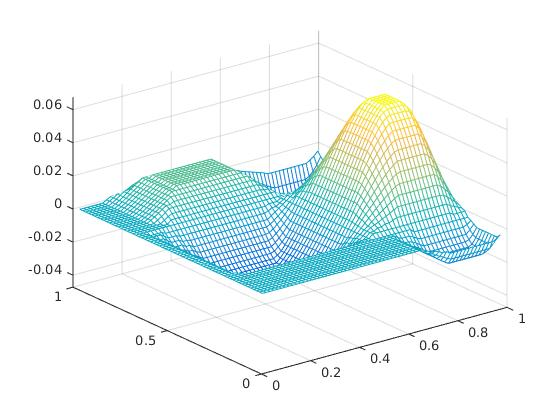}
}
\subfloat[$\beta=0.08$]
{
\includegraphics[height=4cm, width=3.6cm]{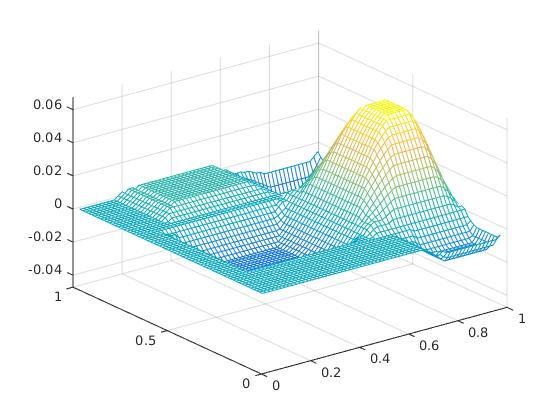}
}
\newline
\subfloat[$\beta=0.12$]
{
\includegraphics[height=4cm, width=3.6cm]{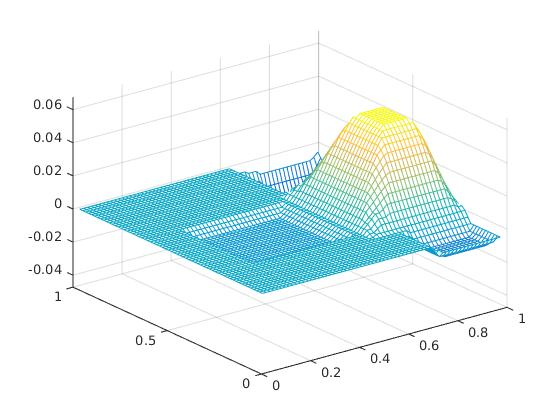}
}
\subfloat[$\beta=0.15$]
{
\includegraphics[height=4cm, width=3.6cm]{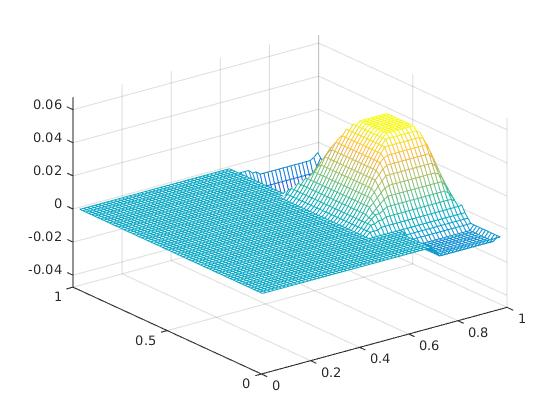}
}
\subfloat[$\beta=0.3$]
{
\includegraphics[height=4cm, width=3.6cm]{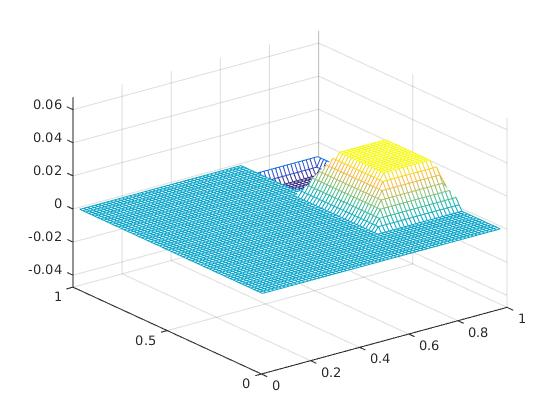}
}
\caption{Solution of the M-matrix problem, $p=.1, \Lambda=(n+1)[D_1;D_2]$, mesh size $h=\frac{1}{64}$. Results obtained by \textbf{Algorithm $1$}.}
\end{figure}

\vspace{1cm}

\begin{remark}\label{Mmatrixrem}
The algorithm was also tested in the following two particular cases: $\Lambda=I$, where $I$ is the identity matrix of size $n^2$, and  $\Lambda=(n+1)D_1$, where $D_1$ is as in \cref{d1d2}.  
\\
In the case $\Lambda=I$ the variational problem \cref{optprob2} for $\beta>0$  gives a sparsity enhancing solution for the elliptic equation \cref{elleq}, that is, the displacement $y$ will be $0$ when the forcing $f$ is small. Indeed, in this case we have sparsity of the solution increasing with $\beta$. Also, the residue is $O(10^{-8})$ and the number of iterations is considerably smaller than in the two matrix case. \\
For the case $ \Lambda=(n+1)D_1$ we show the graphics in Figure $3$. Comparing the graphs for $\beta=.3$ in Figure $2$ and Figure $3$ we can find subdomains where the solution is only unidirectionally piecewise constant in Figure $3$ and piecewise constant in Figure $2$. The number of iterations, $|\Lambda x|_0^c, |\Lambda x|_p^p$ and the residue are comparable to the ones of Table $2$.
\begin{figure}[H]
\centering
\subfloat[$\beta=0.01$]
{
\includegraphics[height=4cm, width=3.6cm]{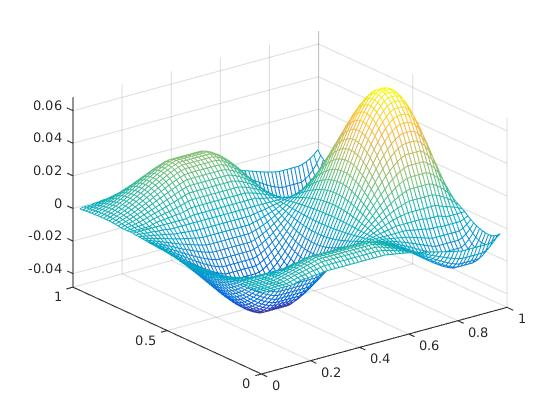}
}
\subfloat[$\beta=0.1$]
{
\includegraphics[height=4cm, width=3.6cm]{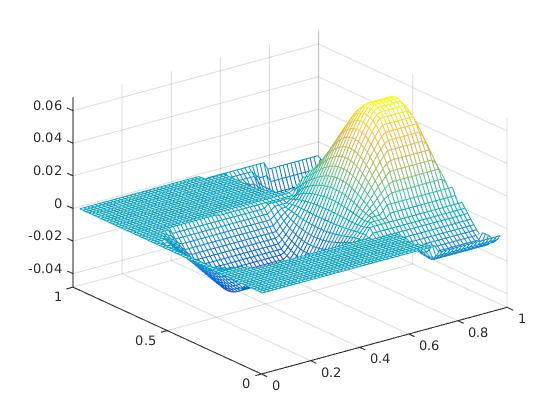}
}\subfloat[$\beta=0.3$]
{
\includegraphics[height=4cm, width=3.6cm]{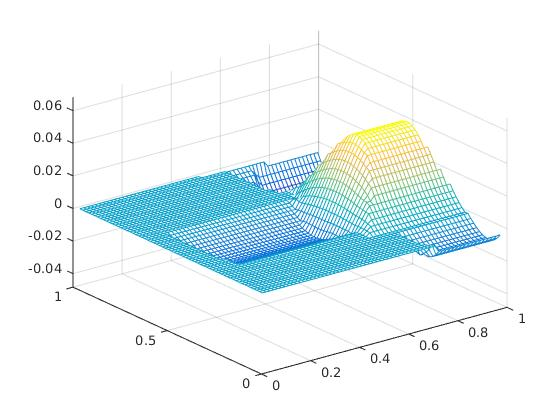}
}
\caption{Solution of the M-matrix problem, $p=.1, \Lambda=(n+1)D_1$, mesh size $h=\frac{1}{64}$. Results obtained by \textbf{Algorithm $1$}.}
\end{figure}
\end{remark}

\subsection{Elliptic control problem}\label{2MC}
We consider the following two dimensional control problem 
\begin{equation}
\label{optprobC2M}
\inf \frac{1}{2}|y-y_d|_2^2+\beta |\nabla u|^p_p, \quad p \in (0,1],
\end{equation}
where we minimize over $u \in L^p(\Omega)$ such that $\nabla u \in L^p(\Omega)$, $\Omega$ is the unit square, $y_d \in L^2(\Omega)$ is a given target function, and  $y \in L^2(\Omega)$ satisfies
\begin{equation}\label{eq2MControl}
\left\{
\begin{array}{lll}
-\Delta y=u \quad &\mbox{ in }\,\, \Omega\\
y=0 \quad & \mbox{ in }\,\,\partial \Omega.
\end{array}
\right.\,
\end{equation}
We discretize   \cref{optprobC2M} by the following $\frac{1}{n}$-mesh size discretized minimization problem
\begin{equation}
\label{optprob2MControl}
\min_{u \in \R^{n^2}}\frac{1}{2}|Eu-b|_2^2+\beta |\Lambda u|^p_p,
\end{equation}
where $E=(A^* A)^{-1}$,  $A$ is as in \cref{A}  (that is, $A^*A$ is the $5$-point star discretization on a uniform mesh on a square of the Laplacian with Dirichlet boundary condition), $
\Lambda=(n+1)\left(\begin{array}{c} D_1\\D_2\end{array}\right)
$ is as in  \cref{2MM} and $b$ is the discretized  target function.\\
For numerical reasons, in order to avoid the inversion of the matrix $A^*A$ we multiply  the necessary optimality condition \cref{optcondeps2m}  by $(E^{-1})^*$ and we get
\begin{equation}\label{optprob2MControl2}
Eu+(E^{-1})^*\Lambda^*\frac{\beta p}{\max(\eps^{2-p},|y|^{2-p})}y^1=b,
\end{equation}
where $y=\Lambda u$. We introduce
$$
z=Eu, \quad p=(\Lambda^*N\Lambda)u,
$$ 
where  we denote by $N$ the diagonal matrix with $i$-entry $(N)_{ii}=\frac{\beta p}{\max(\eps^{2-p},|y_i|^{2-p})}, \, i=1,\cdots n^2$. Since $E^{-1}=A^*A$, we can express \cref{optprob2MControl2} in the  form
\begin{equation}\label{system2C2}
\left\{
\begin{array}{lll}
A^* Az=u \\
A^*A p=b-z\\
(\Lambda^*N\Lambda)u=p.
\end{array}
\right.\,
\end{equation}
To solve \cref{system2C2} the following iteration procedure  is used
\begin{equation}\label{systemmatr}
\left(\begin{array}{ccc}
I& 0& A^*A\\ 0& \Lambda^*N^{k}\Lambda& -I\\ A^*A& -I& 0
\end{array}\right)
\left(\begin{array}{c}
z^{k+1}\\ u^{k+1}\\ p^{k+1}
\end{array}\right)
=
\left(\begin{array}{c}
b\\ 0\\ 0
\end{array}\right)
\end{equation}
where   we denote by  $N^{k}$ the diagonal matrix with $i$-entry $(N^{k})_{ii}=\frac{\beta p}{\max(\eps^{2-p},|y_i^{k}|^{2-p})}$ for $i=1,\cdots, n^2$ and $y^{k}=\Lambda u^k$. Note that the system  matrix \cref{systemmatr} is symmetric.\\
In our  tests the target $b$ is chosen as the image through $E$ of the linear interpolation inside $[.2,.8]\times[.2,.8]\setminus [.3,.7]\times [.3,.7]$ of the step function $1000 \chi_{[.3,.7]\times [.3,.7]}$.
  The parameter $\eps$ was initialized with $10^{-1}$ and decreased  to $10^{-6}$.\\
In Table $3$ we report the results of our test for  $h=\frac{1}{64}$, $p=.1$ and $\beta$ incrementally increasing by factor of $10$ from $10^{-3}$ to $1$. 
 As expected, when $\beta$ increases,  $|\Lambda u|_0^c$  increases and $|\Lambda u|^p_p$ decreases. In Figure $5$ we show the graphics of the solution for different values of $\beta$, thus showing the enhancing piecewise constant behaviour of the solution. 
\begin{table}[tbhp]
	\caption{Sparsity in an elliptic control problem,  $p=.1$, mesh size $h=\frac{1}{64}$.Results obtained by \textbf{Algorithm $1$}.}
	\centering
	\begin{tabular}{|l|c|c|c|c|c|c|c|c|}
		\hline
		{\bf $\beta$ }  &$10^{-3}$& $10^{-2}$&$10^{-1}$& $1$\\
		\hline
		no. of iterates &102&119&5204&10440\\
		\hline
		{\bf $|\Lambda u|_0^c$ } & 799&1486&1673& 2376 \\	
		\hline
		{\bf $|\Lambda u|^p_p$ }& $3.2*10^{4}$& $2.6*10^4$&$2.6*10^4$& $1.2*10^4$ \\
		\hline
		$\mbox{ Residue }$ &  $1.6*10^{-5}$ & $ 2.4*10^{-4}$&$2*10^{-3}$ & $7*10^{-3}$\\
		\hline
	\end{tabular}
\end{table}

\begin{figure}[h!]
\centering
\subfloat[$\beta=0.01$]
{
\includegraphics[height=3.8cm, width=3.8cm]{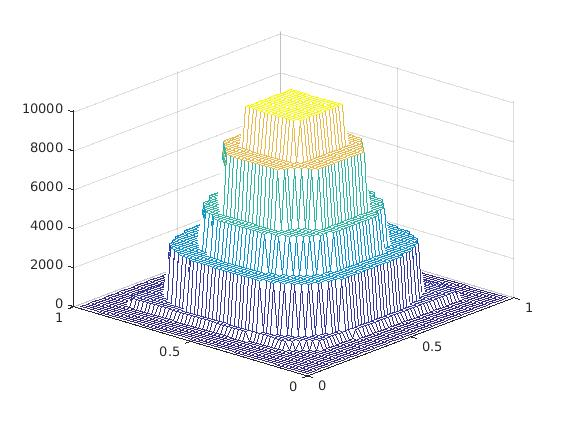}
}
\subfloat[$\beta=0.1$]
{
\includegraphics[height=3.8cm, width=3.8cm]{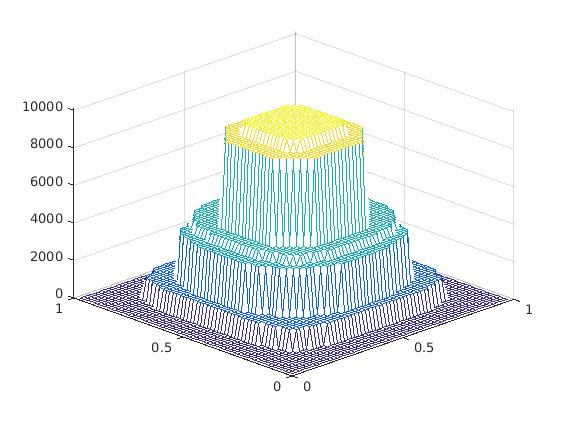}
}
\subfloat[$\beta=1$]
{
\includegraphics[height=3.8cm, width=3.8cm]{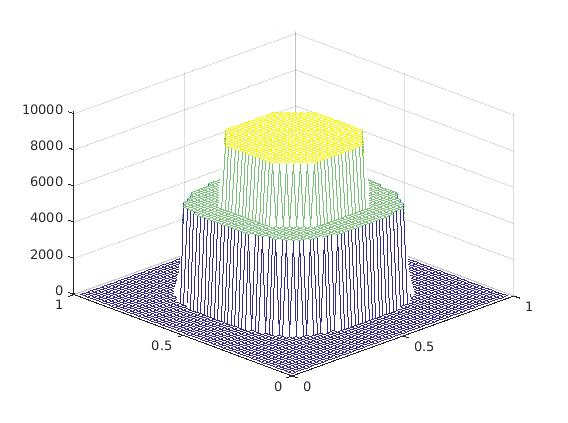}
}
\caption{Solution of the elliptic control problem, $p=.1$, mesh size $h=\frac{1}{64}$. Results obtained by \textbf{Algorithm $1$}.}
\end{figure}
From our  tests we conclude that the monotone algorithm is reliable to find a solution of the $\eps$-regularized optimality condition \cref{optcondeps2m} for a diverse spectrum of problems. It is also stable with respect to the choice of initial conditions. According to the last rows of Tables $1,2,3$ we have that  $\#\{i\, |\, |(\Lambda x)_i|\leq 10^{-10}\}$ is typically very close to the number of singular components at the end of the $\eps$-path following scheme. Depending on the choice of $\beta$ the algorithm requires on the order of $O(10^2)$ to $O(10^3)$ iterations to reach convergence. In the following sections we aim at analysing an alternative algorithm for which the iteration number is smaller, despite  the fact that the convergence can be proved only in special cases.

\section{The active set monotone algorithm for the optimality conditions}\label{alg}
In the following we discuss an algorithm which aims at finding a solution of the original unregularized problem
\begin{equation}
\label{foptprob}
\min_{x \in \R^n}J(x)=\frac{1}{2}|A x-b|_2^2+\beta |\Lambda x|^p_p,
\end{equation}
where  $A \in \mathbb{M}^{m\times n},  b \in \R^m, p \in (0,1]$ and $ \beta \in \R^+$ are as in \cref{sec:optcond} and $\Lambda \in \mathbb{M}^{n\times n}$ is a regular matrix. Existence for the problem \cref{foptprob} follows from  \cref{2mex}.\\
 First  the necessary optimality conditions for problem \cref{foptprob} in the form of a complementary systems are derived. Then an active-set strategy is proposed relying on the form of the optimality condition. 
Convergence of the primal-dual active set strategy is proven in the case $\Lambda=I$. Finally, the results of our numerical tests in two different situations are reported  in  \cref{numericsD}.
\subsection{Necessary optimality conditions}
 For any matrix  $A \in \mathbb{M}^{m\times n}$, we denote  by $A_i$ the $i$-th column of $A$. We have the following necessary optimality conditions. 
 \begin{theorem}\label{thmnecopt} 
 Let $\bar x$ be a global minimizer of \cref{foptprob} and denote $\bar y=\Lambda \bar x$. Then
\begin{equation}\label{optcond}
\left\{
\begin{array}{lll}
A^*(Ax-b)+\Lambda^*\lambda=0\\
(\Lambda \bar x)_i=0 \quad &\mbox{ if }\,\, \left||\tilde{A}_i|^2 \bar y_i +\lambda_i\right|<\mu_i\\
\noalign{\smallskip}
|(\Lambda x)_i|>0 \mbox{ and } \lambda_i=\frac{\beta p (\Lambda\bar x)_i}{|(\Lambda\bar x)_i|^{2-p}} \quad & \mbox{ if }\,\, \left||\tilde{A}_i|^2 \bar y_i +\lambda_i\right|>\mu_i,
\end{array}
\right.\,
\end{equation}
where $\tilde{A}=A \Lambda^{-1}$, $\mu_i=\beta^{\frac{1}{2-p}}(2-p)(2(1-p))^{-\frac{1-p}{2-p}}|\tilde{A}_i|_2^{1-\frac{p}{2-p}}$. If $\left||\tilde{A}_i|^2\bar y_i+\lambda_i\right|=\mu_i$, then $(\Lambda\bar x)_i=0$ or $(\Lambda\bar x)_i=\left(\frac{2\beta (1-p)}{|\tilde{A}_i|_2^2}\right)^{\frac{1}{2-p}}\mbox{ sgn } (|\tilde{A}_i|^2\bar y_i+\lambda_i)$.
\end{theorem}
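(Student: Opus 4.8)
The plan is to remove the matrix $\Lambda$ from the penalty by the change of variables $y=\Lambda x$. Since $\Lambda$ is regular, writing $\tilde A=A\Lambda^{-1}$ turns \eqref{foptprob} into the equivalent separable-penalty least squares problem
\[
\min_{y \in \R^n}\ \tilde J(y)=\tfrac{1}{2}|\tilde A y-b|_2^2+\beta\sum_{i=1}^n|y_i|^p,
\]
and $\bar x$ is a global minimizer of \eqref{foptprob} if and only if $\bar y=\Lambda\bar x$ is a global minimizer of $\tilde J$. I would then \emph{define} the multiplier $\lambda$ through the first line of \eqref{optcond}: since $\Lambda$ is invertible, $\Lambda^*\lambda=-A^*(A\bar x-b)$ forces $\lambda=-\tilde A^*(A\bar x-b)=\tilde A^*(b-\tilde A\bar y)$, i.e. coordinatewise $\lambda_i=\tilde A_i^*(b-\tilde A\bar y)$. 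This is the only freedom in the system, and the remaining two lines are what must be verified.

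Next I would exploit that a global minimizer is a global minimizer in each coordinate separately. Fixing $\bar y_j$ for $j\neq i$ and varying $t=y_i$, the functional $\tilde J$ restricts, up to an additive constant, to $g_i(t)=\tfrac12 a_i t^2-c_i t+\beta|t|^p$, where $a_i=|\tilde A_i|_2^2$ and $c_i=\tilde A_i^*r_i$ with $r_i=b-\sum_{j\neq i}\tilde A_j\bar y_j$ the partial residual. A one-line computation gives the identity $c_i=|\tilde A_i|^2\bar y_i+\lambda_i$, which is precisely the scalar appearing in the case distinction of \eqref{optcond}. Thus $\bar y_i$ must be a global minimizer of $g_i$ over $\R$, and everything reduces to a scalar thresholding analysis.

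The technical heart is this scalar analysis, which I would carry out for $c_i\geq 0$ (the case $c_i<0$ being symmetric) on $t\geq 0$, where $g_i(t)=\tfrac12 a_i t^2-c_i t+\beta t^p$. Critical points solve $c_i=\psi(t):=a_i t+\beta p\,t^{p-1}$; for $p\in(0,1)$ one checks $\psi$ is convex with $\psi(t)\to+\infty$ as $t\to 0^+$ and as $t\to\infty$, so the equation has either no positive root (then $g_i$ is increasing and $t=0$ is the minimizer) or exactly two roots $t_1<t_2$, a local maximum at $t_1$ and a local minimum at $t_2$. The global minimizer on $[0,\infty)$ is then whichever of $0$ and $t_2$ has the smaller value, and the threshold $\mu_i$ is the value of $c_i$ at which $g_i(t_2)=g_i(0)=0$. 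Imposing simultaneously $g_i(t^*)=0$ and $g_i'(t^*)=0$ I would solve for $t^*=\left(\frac{2\beta(1-p)}{a_i}\right)^{\frac{1}{2-p}}$ and back-substitute to obtain $c_i=\mu_i$ with $\mu_i$ exactly as stated. The trichotomy $c_i<\mu_i$, $c_i>\mu_i$, $c_i=\mu_i$ then yields $\bar y_i=0$, the nonzero stationary value (which via $c_i=a_i\bar y_i+\lambda_i$ rearranges to $\lambda_i=\beta p\,\bar y_i/|\bar y_i|^{2-p}$), and the either/or alternative respectively, reinstating $\sgn(c_i)=\sgn(|\tilde A_i|^2\bar y_i+\lambda_i)$ for general sign.

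I expect the main obstacle to be this scalar analysis: correctly separating the one-critical-point and two-critical-point regimes and, above all, locating the exact crossover $\mu_i$ at which the nonzero local minimum overtakes $g_i(0)$, since $\mu_i$ is governed not by the critical-point threshold $\psi(t_0)$ but by the energy-balance condition $g_i(t^*)=0$. Everything else—the change of variables, the definition of $\lambda$, and the passage from vector to coordinatewise optimality—is routine. The endpoint $p=1$ can be checked separately and recovers the usual soft-thresholding rule with $\mu_i=\beta$.
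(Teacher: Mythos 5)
Your proposal is correct and follows essentially the same route as the paper: the change of variables $\bar y=\Lambda\bar x$, $\tilde A=A\Lambda^{-1}$ reduces \cref{foptprob} to the separable problem \cref{optprobid}, after which coordinatewise optimality plus the multiplier $\lambda=\tilde A^*(b-\tilde A\bar y)$ yields \cref{optcond}. The only difference is that where the paper simply invokes Theorem $2.2$ of \cite{KI} for the coordinatewise thresholding property, you re-derive that result from scratch via the scalar analysis of $g_i(t)=\tfrac12 a_i t^2-c_i t+\beta|t|^p$, correctly locating the threshold $\mu_i$ by the energy-balance conditions $g_i(t^*)=g_i'(t^*)=0$.
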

\begin{proof}
Note that if $\bar{x}$ is a global minimizer of \cref{foptprob}, then $\bar{y}=\Lambda \bar x$ is a global minimizer of 
\begin{equation}\label{optprobid}
\min_{y \in \R^n}\frac{1}{2}|\tilde{A}y-b|_2^2+\beta |y|^p_p,
\end{equation}
where $\tilde{A}=A \Lambda^{-1}$.  Then, by the same arguments as in \cite{KI}, Theorem $2.2$ applied to the functional \cref{optprobid},  we get the following property of global minimizers
\begin{equation}\label{optcondproof}
\left\{
\begin{array}{lll}
\bar y_i=0 \quad &\mbox{ if }\,\, |(\tilde{A}_i, f_i)|<\mu_i\\
|y_i|>0 \mbox{ and } (\tilde{A}_i,\tilde{A}\bar y-b)+\frac{\beta p \bar y_i}{|\bar y_i|^{2-p}}=0 \quad & \mbox{ if }\,\, |(\tilde{A}_i, f_i)|>\mu_i,
\end{array}
\right.\,
\end{equation}
where $f_i=b-\tilde{A} y+\tilde{A}_i\bar y_i$ and $\mu_i=\beta^{\frac{1}{2-p}}(2-p)(2(1-p))^{-\frac{1-p}{2-p}}|\tilde{A}_i|_2^{1-\frac{p}{2-p}}$. Moreover, if $|(\tilde{A}_i,f_i)|=\mu_i$, then $\bar y_i=0$ or $\bar y_i=\left(\frac{2\beta (1-p)}{|\tilde{A}_i|_2^2}\right)^{\frac{1}{2-p}}\mbox{ sgn } ((\tilde{A}_i,f_i))$. We introduce the multiplier $\lambda$ and we write 	\cref{optcondproof} in the following way
\begin{equation}\label{optcondproof2}
\left\{
\begin{array}{lll}
\tilde{A}^*(\tilde{A}y-b)+\lambda=0\\
\bar y_i=0 \quad &\mbox{ if }\,\, \left||\tilde{A}_i|^2 \bar y_i +\lambda_i\right|<\mu_i\\
\noalign{\smallskip}
|y_i|>0 \mbox{ and } \lambda_i=\frac{\beta p \bar y_i}{|\bar y_i|^{2-p}} \quad & \mbox{ if }\,\, \left||\tilde{A}_i|^2 \bar y_i +\lambda_i\right|>\mu_i.
\end{array}
\right.\,
\end{equation}
Then the optimality conditions \cref{optcond} follows from \cref{optcondproof2} with $\bar y=\Lambda\bar x$. The equality conditions follow similarly by $\bar y=\Lambda\bar x$ and the first equation in \cref{optcondproof2}.
\end{proof}
%
\begin{remark}
We remark that  \cref{thmnecopt}  still hold  when considering  \cref{foptprob} in the  infinite dimensional sequence spaces $\ell^p$ in the case $\Lambda=I$.
\end{remark}
Moreover, we have the following corollary, which can be proved as in \cite{KI}, Corollary $2.1$.
\begin{corollary}\label{lowerbound}
	If $(\Lambda\bar x)_i \neq 0$, then $|(\Lambda\bar x)_i| \geq \left( \frac{2 \beta (1-p)}{|(A\Lambda^{-1})_i|_2^2}\right)^{\frac{1}{2-p}}.$
\end{corollary}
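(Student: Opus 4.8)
The plan is to reduce the statement to a one-dimensional minimization in the $i$-th coordinate and to exploit \emph{global} (not merely stationary) optimality. As in the proof of \cref{thmnecopt}, I would first pass to $\bar y = \Lambda \bar x$ and $\tilde A = A\Lambda^{-1}$, so that $\bar y$ is a global minimizer of \cref{optprobid}. Since $|(\Lambda \bar x)_i| = |\bar y_i|$ and $|(A\Lambda^{-1})_i|_2 = |\tilde A_i|_2$, the assertion becomes: if $\bar y_i \neq 0$, then $|\bar y_i| \geq \big(2\beta(1-p)/|\tilde A_i|_2^2\big)^{1/(2-p)}$.

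Fixing all components $\bar y_j$, $j \neq i$, at their optimal values, global minimality of $\bar y$ forces $t = \bar y_i$ to minimize the scalar function $\psi(t) = \tfrac12|\tilde A_i|_2^2\, t^2 - (\tilde A_i, f_i)\, t + \beta|t|^p$ over $t \in \R$, where $f_i = b - \tilde A \bar y + \tilde A_i \bar y_i$ is as in \cref{thmnecopt} and where I have dropped the additive constant $\tfrac12|f_i|_2^2$, which does not affect the location of the minimizer. Writing $a = |\tilde A_i|_2^2 > 0$ and $c = (\tilde A_i, f_i)$, and assuming without loss of generality $\bar y_i > 0$ (the case $\bar y_i < 0$ being symmetric under $t \mapsto -t$, $c \mapsto -c$), the stationarity condition at $\bar y_i$ reads $a\,\bar y_i - c + \beta p\, \bar y_i^{p-1} = 0$; this is precisely the relation $\lambda_i = \beta p\, \bar y_i/|\bar y_i|^{2-p}$ recorded in \cref{thmnecopt}.

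The decisive step is to combine this with the zeroth-order comparison $\psi(\bar y_i) \leq \psi(0) = 0$, valid because $\bar y_i$ is a \emph{global} minimizer of $\psi$; explicitly $\tfrac{a}{2}\bar y_i^2 - c\,\bar y_i + \beta \bar y_i^p \leq 0$. Using the stationarity condition to substitute $c\,\bar y_i = a\,\bar y_i^2 + \beta p\, \bar y_i^p$ and simplifying yields $\beta(1-p)\,\bar y_i^p \leq \tfrac{a}{2}\,\bar y_i^2$, that is $\bar y_i^{2-p} \geq 2\beta(1-p)/a$, which is the claimed lower bound after raising to the power $1/(2-p)$.

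I expect no serious obstacle here; the one point worth emphasizing is that the sharp constant $2\beta(1-p)$ comes from using global minimality rather than stationarity. The second-order test $\psi''(\bar y_i) \geq 0$ alone would only give $\bar y_i^{2-p} \geq \beta p(1-p)/a$, which is strictly weaker; it is the comparison $\psi(\bar y_i)\leq\psi(0)$ — available precisely because $\bar y$ is a global and not just a local minimizer — that produces the factor $2$. The rest is bookkeeping: tracking signs through the reduction and verifying that the dropped constant term leaves the minimizer unchanged.
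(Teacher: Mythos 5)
Your proof is correct, and it follows essentially the same route as the paper: the paper proves the corollary by passing to $\bar y=\Lambda\bar x$, $\tilde A=A\Lambda^{-1}$ and invoking Corollary $2.1$ of \cite{KI}, whose underlying argument is exactly your coordinate-wise reduction combined with stationarity and the global comparison $\psi(\bar y_i)\leq\psi(0)$. Your write-up simply makes explicit the scalar computation that the paper leaves to the citation, including the correct observation that global (not merely local) optimality is what yields the constant $2\beta(1-p)$.
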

\subsection{The augmented Lagrangian formulation and the primal-dual active set strategy}
The active set strategy can be motivated by the following augmented Lagrangian formulation for problem \cref{foptprob}.  Let $P$ be a nonnegative self-adjoint matrix $P$, satisfying 
\begin{equation}\label{P}
((A^TA+\eta P)x,x)\geq \xi |x|_{2}^2
\end{equation}
for some $\eta,\xi>0$, independent of $x \in \R^n$. We set
\begin{equation}\label{bonchoix}
B_i=|(\bar A \Lambda^{-1})_i|_2^2, \,\, \mbox{where }\,\, \bar{A}=\begin{pmatrix}A&\\(\eta P)^{\frac{1}{2}}\end{pmatrix},
\end{equation}
and let $B$ denote the diagonal invertible operator with entries $B_i$. 
Thus, if $A$ is nearly singular, we use $\eta>0$ and the  functional $\frac{\eta}{2}(x,Px)$ to regularize \cref{foptprob}. Consider the associated augmented Lagrangian functional
$$
L(x,y,\lambda)=\frac{1}{2}|Ax-b|_2^2+\frac{\eta}{2}(Px,x)+\beta\sum_{i=1}^n|y_i|^p+\sum_{i=1}^n\frac{B_i}{2}|y_i-(\Lambda x)_i|^2+(\lambda_i,(\Lambda x)_i-y_i).
$$
Given $x,\lambda$, we first minimize the Lagrangian $L$ coordinate-wise with respect  to $y$. For this purpose we consider
\begin{eqnarray}
& &\beta |y_i|^p+\frac{B_i}{2}|y_i-(\Lambda x)_i|^2-(\lambda_i,(\Lambda x)_i-y_i) \nonumber \\ &=& \beta |y_i|^p+\frac{B_i}{2}\left(y_i^2-2y_i\left((\Lambda x)_i+\frac{\lambda_i}{B_i}\right)\right)+\frac{B_i(\Lambda x)_i^2}{2}+\lambda_i(\Lambda x)_i \nonumber\\ &=& \beta |y_i|^p+\frac{B_i}{2}\left[y_i-\left((\Lambda x)_i+\frac{\lambda_i}{B_i}\right)\right]^2-\frac{B_i}{2}\left[(\Lambda x)_i+\frac{\lambda_i}{B_i}\right]^2+\frac{B_i(\Lambda x)_i^2}{2}+\lambda_i(\Lambda x)_i \nonumber \\ &=& \beta |y_i|^p+\frac{1}{2}\left[B_i^{\frac{1}{2}}y_i-\left(B_i^{\frac{1}{2}}(\Lambda x)_i+\frac{\lambda_i}{B_i^{\frac{1}{2}}}\right)\right]^2-\frac{\lambda_i^2}{2B_i}.
\end{eqnarray}
Then, by  \cref{thmnecopt}, the Lagrangian $L$ can be minimized coordinate-wise with respect to $y$ by considering the expressions $\beta |y_i|^p+\frac{1}{2}\left[B_i^{\frac{1}{2}}y_i-\left(B_i^{\frac{1}{2}}(\Lambda x)_i+\frac{\lambda_i}{B_i^{\frac{1}{2}}}\right)\right]^2$ to obtain
\begin{equation}\label{mapphi}
y_i=\Phi(x,\lambda)_i=\left\{
\begin{array}{ll}
|y_i|>0,\,\, B_iy_i+\frac{\beta p y_i}{|y_i|^{2-p}}=B_i(\Lambda x)_i+\lambda_i  \quad &\mbox{ if }\,\, \left|B_i(\Lambda x)_i +\lambda_i\right|>\mu_i\\
0 \quad & \mbox{ otherwise},
\end{array}
\right.\,
\end{equation}
where $\mu_i=\beta^{\frac{1}{2-p}}(2-p)(2(1-p))^{-\frac{1-p}{2-p}}B_i^{\frac{1-p}{2-p}}$. \\ 
Given $y,\lambda$, we minimize $L$  at $x$ to obtain
$$
A^*(Ax-b)+\eta P x+\Lambda^*B(\Lambda x-y)+\Lambda^*\lambda=0,
$$
where $B$ is the diagonal operator with entries $B_i$. Thus, the augmented Lagrangian method \cite{KIb} uses the updates:
\begin{equation}
\begin{cases}
A^*(Ax^{n+1}-b)+\eta P x^{n+1}+\Lambda^*B(\Lambda x^{n+1}-y^n)+\Lambda^*\lambda^n=0,\\
y^{n+1}=\Phi(x^{n+1},\lambda^n),\\
\lambda^{n+1}=\lambda^n+B(\Lambda x^{n+1}-y^{n+1}).
\end{cases}
\end{equation}
If it converges, i.e. $x^n\rightarrow x, y^n \rightarrow \Lambda x^n$ and $\lambda^n\rightarrow \lambda$, then
\begin{equation}\label{mapphi}
\left\{
\begin{array}{lll}
A^*(Ax-b)+\eta Px+\Lambda^*\lambda=0,\\
(\Lambda x)_i=0 \quad &\mbox{ if }\,\, \left|B_iy_i +\lambda_i\right|\leq\mu_i,\\
|(\Lambda x)_i|>0 \mbox{ and } \lambda_i=\frac{\beta p (\Lambda x)_i}{|(\Lambda x)_i|^{2-p}}, \quad &\mbox{ if }\,\, \left|B_iy_i +\lambda_i\right|>\mu_i,
\end{array}
\right.\,
\end{equation}
which is the optimality condition for  $J_P(x)=\min_{x \in \R^n} \frac{1}{2}|Ax-b|_2^2+\beta|\Lambda x|_p^p+\frac{\eta}{2}(x,Px)$. \\

Motivated by  the form of the optimality conditions \cref{mapphi} obtained by the augmented Lagrangian formulation, we formulate a  primal-dual active set strategy for the following system 
\begin{equation}\label{mapphieps}
\left\{
\begin{array}{lll}
A^*(Ax-b)+\eta Px+\Lambda^*\lambda=0,\\
(\Lambda x)_i=0 \quad &\mbox{ if }\,\, \left|B_iy_i +\lambda_i\right|\leq\mu_i,\\
\lambda_i=\frac{\beta p (\Lambda x)_i}{\max(\eps^{2-p},|(\Lambda x)_i|^{2-p}|)}, \quad &\mbox{ if }\,\, \left|B_iy_i +\lambda_i\right|>\mu_i,
\end{array}
\right.\,
\end{equation}
where $\eps>0$ in the third equation is a  fixed parameter enough small. Note that \cref{mapphieps} coincides with \cref{mapphi} for $\eps=0$. The scope of the parameter $\eps$  is to avoid the computation of $\frac{\beta p (\Lambda x^{n+1})_i}{|(\Lambda x^{n+1})_i|^{2-p})|}$  when $(\Lambda x^{n+1})_i=0$, which could happen if $x^{n+1}$ is   far enough from a solution of the optimality conditions. 

\begin{algorithm}[h!]
	\caption{Primal-dual active set strategy}
	\begin{algorithmic}[1]
		\STATE Initialize $\lambda^0, x^0$.  Set $y^0=\Lambda x^0$. Set $n=0$.
		\REPEAT 
\STATE Solve for $(x^{n+1}, \lambda^{n+1})$
\begin{equation}\label{eqnumstr1}
A^*(Ax^{n+1}-b)+\eta Px^{n+1}+ \Lambda^*\lambda^{n+1}=0,
\end{equation}
where
\begin{align}
(\Lambda x^{n+1})_i=0 \quad &\mbox{ if }\,\, i \in \{i \, : \, |B_iy^n_i+\lambda^n_i|\leq \mu_i\}\label{active1}\\
\lambda^{n+1}_i=\frac{\beta p (\Lambda x^{n+1})_i}{\max(\eps^{2-p},|(\Lambda x^{n+1})_i|^{2-p})} \quad  &\mbox{ if }\,\, i\in \{i \, : \, |B_iy^n_i+\lambda^n_i|> \mu_i\}\label{inactive1}.
\end{align}
\STATE Set $y^{n+1}=\Lambda x^{n+1}$, $n=n+1$.
\UNTIL{the stopping criterion is fulfilled.}
	\end{algorithmic}
\end{algorithm}

\begin{remark}\label{remc}
Note that $B_i$ has to be chosen exactly as in \eqref{bonchoix} in order to have the convergence of the method  to the optimality condition \cref{optcond}. In  \cite{LI} an alternate direction method of multipliers is proposed for problems as in \cref{foptprob} and the augmented Lagrangian formulation is considered with a penalization term  chosen "large enough". The convergence of the proposed alternate direction method of multiplier is proved to a stationary point as defined in \cite{LI}, equation $4$. We deduce that, due to the different choice in the penalization term, the stationary points considered in \cite{LI} (to which the ADMM proposed in \cite{LI} is proved to converge) do not coincide with the stationary points identified by our optimality condition \cref{optcond}. To make it evident, we propose to look at the following $1$-dimensional example.
Suppose we want to minimize
\begin{equation}\label{problemca}
\frac{1}{2}|x-b|_2^2+\beta|x|_p^p
\end{equation}
for  $p \in (0,1],$ $ \beta >0$. 
 By  \cref{thmnecopt}, the optimality condition is 
\begin{equation}\label{optcondcx}
\left\{
\begin{array}{lll}
x=0\quad &\mbox{ if } b<\mu\\
|x|>0 \mbox{ and } x-b+\beta p\frac{x}{|x|^{2-p}}=0 \quad &\mbox{ if } b >\mu,
\end{array}
\right.\,
\end{equation}
where we denote $\mu:=d_{\beta,p}$ and $d_{\beta,p}=\beta^{\frac{1}{2-p}}(2-p)(2(1-p))^{-\frac{1-p}{2-p}}$ is given in \cref{optcond}. 
Consider for $c>0$ the augmented Lagrangian 
\begin{equation}\label{auglagc}
L(x,y)=\frac{1}{2}|x-b|_2^2+\beta|y|^p+\frac{c}{2}|x-y|^2_2.
\end{equation}
Given $y$ fixed, we minimize with respect to $x$ to obtain
$$
x-b+c(x-y)=0.
$$
Then, given $x$ fixed, we minimize with respect to $y$  the expression $\beta|y|^p+\frac{c}{2}|x-y|^2$. 
By \cref{thmnecopt}, we obtain
\begin{equation}\label{optcondcy}
\left\{
\begin{array}{lll}
y=0\quad &\mbox{ if } cx<\mu_c\\
c(y-x)+ \beta p\frac{y}{|y|^{2-p}}=0 \quad &\mbox{ if } cx >\mu_c,
\end{array}
\right.\,
\end{equation}
where $\mu_c=d_{\beta,p}\sqrt{c}^{\frac{(2-2p)}{2-p}}$. Then we obtain the following optimality conditions 
\begin{equation}\label{optcondc}
\left\{
\begin{array}{lll}
x-b+c(x-y)=0\\
y=0\quad &\mbox{ if } cx<\mu_c\\
c(y-x)+\beta p\frac{y}{|y|^{2-p}} =0\quad &\mbox{ if }cx >\mu_c.\\
\end{array}
\right.\,
\end{equation}
Note that if $c>1$, then $\mu<\mu_c$. Then we consider $\mu<b<\mu_c$ in the augmented Lagrangian formulation \cref{auglagc} and  we get that 
$
y=0, x=\frac{b}{1+c} 
$
is a solution to \cref{optcondc}
and we note that
$$
(x,y)\to (0,0)\, \, \mbox{ as } c \to + \infty.
$$
On the contrary, since $b>\mu$, we have that $x=0$  is not a solution of \cref{optcondcx}.  We remark that considering a Lagrange multiplier in \cref{auglagc} leads to the same conclusion.
\end{remark}

\subsection{Convergence of the primal-dual active set strategy: case $\Lambda=I$}
While the numerical performance of the primal-dual active set strategy proved to be very successful, its convergence analysis is still a substantial challenge. Then we focus on the case $\Lambda=I$ for which we can give a sufficient condition for uniqueness of the solution to \cref{mapphieps} and for convergence. Moreover, the case $\Lambda=I$ will be successfully tested in an image recontruction problem in  \cref{mimrec}.\\
%
\begin{remark}
We remark that the uniqueness and the convergence results, namely  \cref{uniqueness} and  \cref{convddc},  still hold when considering optimization of problems as \cref{foptprob} in the  infinite dimensional sequence spaces $\ell^p$.
\end{remark}
\begin{remark}\label{boundsol}
Notice that the sequence $\{x^n\}_{n\in \mathbb{N}}$ is bounded uniformly in $n$. Indeed since $(x^{n+1}, \lambda^{n+1})\geq 0$ for all $n$,  we have from the first equation in \cref{mapphieps}
$$
((A^*A+\eta P)x^{n+1}, x^{n+1}) \leq (Ax^{n+1},b),
$$
which coupled with  \eqref{P} gives
$
\xi |x^{n+1}|_2\leq \|A\|_2|b|_2.
$
We denote $M:=\|A\|_2|b|_2\xi^{-1}$, where $\xi$ is defined in \cref{P}. Then 
$
|x^{n+1}|_2\leq M.
$
\end{remark}
\subsubsection{Uniqueness}\label{subuniq}
For any pair $x,\lambda$ we define
$$
\mathcal{I}(x,\lambda)=\{i \, : \, |B_ix_i+\lambda_i|> \mu_i\} \mbox{ \,\,and\,\, } \mathcal{A}(x,\lambda)=\{i \, : \, |B_ix_i+\lambda_i|\leq \mu_i\} 
$$
and we set
$$
Q=A^*A+\eta P.
$$
We denote for $p \in (0,1]$
\begin{equation}\label{ag}
\alpha:=\frac{1-p}{p-2}\leq 0, \quad \gamma=\frac{1}{p-2}<0,
\end{equation}
and we note that
\begin{equation}\label{apiug}
\alpha+1=-\gamma.
\end{equation}
We will use the following diagonal dominance condition:
\begin{equation}\label{ddc}
\|B^{\alpha}(Q-B)B^{\gamma}|ì\|_\infty\leq \rho \mbox{ for some } \rho \in (0,1).
\end{equation}
\begin{remark}
In the case that $Q$ is a diagonal matrix $Q-B=0$ and \cref{ddc} is trivially satisfied.
\end{remark}
\begin{remark}
We observe that for $p\to0$, we have $\alpha=\gamma=-\frac{1}{2}$. In particular \cref{ddc} coincides with the diagonal dominance condition considered in \cite{KI} to prove the convergence of the primal dual active set strategy in the case $p=0$.
 \end{remark}
We set the following notation which will be used for the rest of this section:
 \begin{equation}\label{constants}
 C=(2-p)(2(1-p))^{-\frac{1-p}{2-p}},\quad E=p \|B^{\alpha}\|_\infty  |x|_\infty, \quad F=|x|_\infty \|B^{-\gamma}\|_\infty.
 \end{equation}
 Under the diagonal dominance condition \cref{ddc}, 
we prove that, if $x, \lambda$ is a solution to \cref{mapphieps} satisfying one of the following conditions
 \begin{equation}\label{cu}
\min_{\mathcal{I}(x,\lambda)}|B^{\alpha}(\lambda+Bx)|\geq (1+\delta)\beta^{-\gamma}C,
\end{equation}
\begin{equation}\label{cu2}
\max_{\mathcal{A}(x,\lambda)}|B^{\alpha}(\lambda+Bx)|\leq (1-\delta)\beta^{-\gamma}C,
\end{equation}
for some $\delta >0$ large enough, then it is necessarely unique.\\ Above $\min_{\mathcal{I}(x,\lambda)}|B^{\alpha}(\lambda+Bx)|$ stands for $\min_{i \in \mathcal{I}(x,\lambda)}|B_i^{\alpha}(\lambda_i+B_i x_i)|$.
Henceforth we refer to  \cref{cu}-\cref{cu2} as strictly complementary condition.   Note that $\mu_i=\beta^{-\gamma}CB_i^{-\alpha}$.\\
The precise statement of the uniqueness result  is given in the following theorem. The proof is inspired by \cite{KI}, Theorem $5.1$.

\begin{theorem}\label{uniqueness}(Uniqueness)
Assume that \cref{ddc} holds. Let $C, E, F$ be defined as in \cref{constants} and $\alpha, \gamma$ in \cref{ag}.
 Then there exists at most one solution to \cref{mapphieps} satisfying \cref{cu} 
for some $\delta>0$ large enough and depedending on $\eps, \rho, \beta,\alpha,\gamma, C, E, F$ (see \cref{delta}).
An analogous statement holds with \cref{cu} replaced by \cref{cu2}. 
\end{theorem}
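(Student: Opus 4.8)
The plan is to reduce the system \cref{mapphieps} (with $\Lambda=I$) to one scalar fixed-point relation per coordinate and then run a contraction argument in a weighted $\ell^\infty$-norm. First I would set $Q=A^*A+\eta P$ and use the first line of \cref{mapphieps} to eliminate $\lambda=A^*b-Qx$, so that the quantity governing the active/inactive split becomes $w_i:=B_i^\alpha(\lambda_i+B_ix_i)=B_i^\alpha((A^*b)_i-((Q-B)x)_i)$. Since $\mu_i=\beta^{-\gamma}CB_i^{-\alpha}$, the weight $B_i^\alpha$ renders the activity threshold uniform across coordinates: $i\in\mathcal I$ iff $|w_i|>\beta^{-\gamma}C$ and $i\in\mathcal A$ iff $|w_i|\le\beta^{-\gamma}C$, while \cref{cu} sharpens the inactive estimate to $|w_i|\ge(1+\delta)\beta^{-\gamma}C$. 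On the inactive set, combining the third line of \cref{mapphieps} with $\lambda=A^*b-Qx$ shows that $x_i$ solves $B_i^\alpha g_i(x_i)=w_i$, where $g_i(t)=B_it+\beta p\,t/\max(\eps^{2-p},|t|^{2-p})$.

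Given two solutions $(x,\lambda)$ and $(x',\lambda')$ both satisfying \cref{cu}, I would first show that they share the same active and inactive sets. Here the key observation is $w_i-w_i'=-(B^\alpha(Q-B)B^\gamma\,B^{-\gamma}(x-x'))_i$, so the diagonal dominance \cref{ddc} gives $|w_i-w_i'|\le\rho\,|B^{-\gamma}(x-x')|_\infty$, and the right-hand side is bounded by a fixed multiple of $F$ and $\rho$ via the uniform a priori bound $|x|_\infty,|x'|_\infty\le M$ from \cref{boundsol}. If some index $i$ were inactive for one solution and active for the other, then \cref{cu} together with the definition of $\mathcal A$ would force $|w_i-w_i'|\ge\delta\,\beta^{-\gamma}C$; choosing $\delta$ large enough relative to $\rho,\beta,\gamma,C,F$ contradicts the previous bound. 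Hence $\mathcal I(x,\lambda)=\mathcal I(x',\lambda')$, the active sets agree, and on the common active set $x_i=x_i'=0$.

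On the common inactive set I would close the argument with a contraction. The constraint $|w_i|\ge(1+\delta)\beta^{-\gamma}C$ confines $|x_i|$ to the increasing branch of $g_i$, away from the region near $|t|=\eps$ where $g_i'(t)=B_i+\beta p(p-1)|t|^{p-2}$ can fail to be positive; on that branch $g_i$ admits a local inverse $h_i$ with $L_i:=h_i'=(B_i^\alpha g_i'(x_i))^{-1}$, and using $\alpha+1=-\gamma$ from \cref{apiug} one computes $L_iB_i^{-\gamma}=1+O(|x_i|^{p-2})$. Therefore $|B_i^{-\gamma}(x_i-x_i')|\le L_iB_i^{-\gamma}|w_i-w_i'|\le\rho(1+O(|x_i|^{p-2}))\,|B^{-\gamma}(x-x')|_\infty$, and since a larger $\delta$ forces $|x_i|$ larger the correction term can be made arbitrarily small. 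For $\delta$ large enough the resulting factor is strictly below $1$, so the weighted norm $|B^{-\gamma}(x-x')|_\infty$ (whose supremum is attained on the inactive set, the active contributions being zero) must vanish; this yields $x=x'$ and then $\lambda=A^*b-Qx=\lambda'$. The statement under \cref{cu2} follows symmetrically, the gap $\delta$ now buffering the active side.

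The step I expect to be the main obstacle is the scalar inversion of $g_i$: because $p<1$, the derivative $g_i'$ is not sign-definite just above $|t|=\eps$, so $g_i$ is neither globally monotone nor globally invertible with a uniform Lipschitz constant. The role of the strict complementarity with $\delta$ large is precisely to push each $x_i$ onto the monotone branch and to quantify, through the constants $E$ and $F$ of \cref{constants} and the parameters $\eps,\rho,\beta,\alpha,\gamma,C$ from \cref{ag}, how large $\delta$ must be taken both to prevent any change of active set and to secure a contraction constant strictly below $1$.
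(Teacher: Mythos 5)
Your proposal takes a genuinely different route from the paper's. The paper subtracts the two linear equations, multiplies by $B^\alpha$, and then argues by contradiction in two cases (identical supports, different supports), combining the dominance condition \cref{ddc} with the bounds $|B_i^\alpha\lambda_i|\le\beta\eps^{1/\gamma}E$ on inactive indices and $|B_i^\alpha\lambda_i|\le\beta^{-\gamma}C$ on active ones to show that two pairs satisfying \cref{cu} are incompatible with $\delta>\bar\delta$; it never identifies the active sets nor exhibits the solutions as equal. You instead (a) show the two solutions share the same active set, and (b) close the argument by inverting, coordinate-wise on the common inactive set, the scalar map $g_i(t)=B_it+\beta p\,t/\max(\eps^{2-p},|t|^{2-p})$ on its outer monotone branch, getting a contraction in the $|B^{-\gamma}\cdot|_\infty$-norm and hence $x=x'$ directly. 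Step (a) is correct (it is essentially the paper's Case 2 in cleaner form, and the a priori bound of \cref{boundsol} does transfer from iterates to solutions of \cref{mapphieps}). Step (b) can be made rigorous for \cref{cu}: as you note, a large value of $|w_i|$ alone does not confine $x_i$ to the monotone branch, since $g_i$ climbs steeply on $[0,\eps]$ up to $g_i(\eps)=B_i\eps+\beta p\eps^{p-1}$ and then decreases, so intermediate heights are attained three times; but requiring $(1+\delta)\mu_i>B_i\eps+\beta p\eps^{p-1}$ (admissible, since $\delta$ may depend on $\eps$, $\beta$ and the $B_i$) forces the outer branch, and then larger $\delta$ forces $|x_i|$ large, so $B_i/g_i'(x_i)\to 1$ and the factor $\rho(1+o(1))$ drops below $1$. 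Modulo sign-consistency of $x_i,x_i'$ and applying the mean value theorem on an interval of heights lying entirely above $g_i(\eps)$ (both again secured by $\delta$ large), this gives a correct and in fact more informative argument for the \cref{cu} half: it produces equality of the two solutions rather than a contradiction.

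The genuine gap is your final claim that \cref{cu2} ``follows symmetrically.'' For your mechanism it does not. Under \cref{cu2} the enlarged margin sits on the \emph{active} side: inactive indices are only known to satisfy $|w_i|>\beta^{-\gamma}C$, with no gap. Your contraction lives entirely on the inactive set and uses the inactive margin twice, and essentially: first to push $x_i$ past the nonmonotone region of $g_i$ (without it, $x_i$ and $x_i'$ may sit on different branches solving the same scalar equation $B_i^\alpha g_i(\cdot)=w_i$, and no Lipschitz inverse exists), and second to make the contraction constant $\rho\sup_i B_i/g_i'(x_i)$ strictly smaller than one (near $t_i^*=(\beta p(1-p)/B_i)^{1/(2-p)}$ the factor $B_i/g_i'$ blows up). Neither ingredient survives when the margin is transferred to the active side, so the \cref{cu2} case needs a separate argument — one exploiting bounds on the active components, in the spirit of the paper's contradiction scheme (which the paper itself disposes of only by asserting analogy) — and, as written, your proof covers only the \cref{cu} half of the statement.
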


	\begin{remark}\label{delta}
	More precisely, it will be seen from the proof that  $\delta$ in \cref{cu} has to  satisfy
	$
\delta > \frac{2\rho}{1-\rho} (1+ \beta^{-\alpha}\eps^{\frac{1}{\gamma}} E C^{-1})+\beta^{-\alpha} \eps^{\frac{1}{\gamma}} E C^{-1} +\beta^{\gamma}FC^{-1}:=\bar{\delta},
$
where we recall that $-\alpha \geq 0$ and $\gamma<0$.
	\end{remark}
\begin{proof}
Assume that there exist two pairs $x,\lambda$ and $\hat x, \hat \lambda$ satisfying \cref{mapphieps} and \cref{cu}.  Then we have
\begin{equation}\label{11}
Q(x-\hat x)+\lambda -\hat \lambda=0.
\end{equation}
 Multiplying \cref{11} by $B^\alpha$ and using \cref{apiug}, we have
\begin{equation}\label{u1}
B^{-\gamma}x+B^{\alpha}\lambda-(B^{-\gamma}\hat x+B^{\alpha}\hat \lambda)=B^{\alpha}(B-Q)B^{\gamma}B^{-\gamma}(x-\hat x).
\end{equation}
\textbf{Case 1:} First consider the case $x_i \neq 0$ if and only if $\hat x_i \neq 0$. Then we find
\begin{equation}\label{lambdak}
\lambda_i=\frac{\beta p x_i}{\max(\eps^{2-p}, |x_i|^{2-p})}, \quad \hat{\lambda}_i=\frac{\beta p \hat{x}_i}{\max(\eps^{2-p}, |\hat{x}_i|^{2-p})}.
\end{equation}
Equations \cref{u1} and \cref{lambdak} and the diagonal dominance condition \cref{ddc} imply that 
$$
B_i^{-\gamma}(x_i-\hat{x}_i)+B_i^{\alpha}\left(\frac{\beta p x_i}{\max(\eps^{2-p}, |x_i|^{2-p})}-\frac{\beta p \hat{x}_i}{\max(\eps^{2-p}, |\hat{x}_i|^{2-p})}\right)\leq \rho |B^{-\gamma}(x-\hat x)|_\infty
$$
and hence  we have
\begin{equation}\label{u2}
|B^{-\gamma}(x-\hat x)|_\infty \leq \frac{2\beta \eps^{\frac{1}{\gamma}} E}{1-\rho},
\end{equation}
where $E$ is defined in \cref{constants}.
Then by \cref{apiug}, \cref{u1}, \cref{ddc} and \cref{u2}, we have for each $i$:
\begin{eqnarray*}
|B_i^{\alpha}(\lambda_i+B_i x_i)|-|B_i^{\alpha}(\hat \lambda_i+B_i \hat x_i)|&\leq &|B_i^{\alpha}(\lambda_i-\hat \lambda_i)+B_i^{-\gamma}(x_i-\hat x_i)|\nonumber \\ &\leq & |B^{\alpha}(\lambda-\hat \lambda)+B^{-\gamma}(x -\hat x)|_\infty\nonumber =  \|B^{\alpha}(B-Q)B^{\gamma}B^{-\gamma}(x-\hat x)\|_\infty \\&\leq&\rho |B^{-\gamma}(x-\hat x)|_\infty \leq   \frac{ 2\rho \beta \eps^{\frac{1}{\gamma}} E }{1-\rho}
\end{eqnarray*}
and thus
\begin{equation}\label{refstrana}
|B_i^{\alpha}(\lambda_i+B_i x_i)|-|B_i^{\alpha}(\hat \lambda_i+B_i \hat x_i)|\leq   \frac{ 2\rho \beta \eps^{\frac{1}{\gamma}} E }{1-\rho}.
\end{equation}
By \cref{apiug} and the second equation in \cref{lambdak} we get
\begin{equation}\label{ub1}
|B_i^{\alpha}(\hat \lambda_i+B_i\hat x_i)|\leq |B_i^{\alpha}\hat \lambda_i|+|B_i^{-\gamma}\hat x_i|\leq \beta \eps^{\frac{1}{\gamma}} E+F,
\end{equation}
where $E,F$ are defined in \cref{constants}.
From \cref{refstrana}, \cref{ub1} and \cref{cu} we deduce that
$$
(1+\delta)\beta^{-\gamma}C-\beta \eps^{\frac{1}{\gamma}} E-F\leq \frac{2\rho \beta \eps^{\frac{1}{\gamma}} E}{1-\rho},
$$
hence
$
\delta \leq\frac{2\rho \beta^{-\alpha} \eps^{\frac{1}{\gamma}} E C^{-1}}{1-\rho}+\beta^{-\alpha} \eps^{\frac{1}{\gamma}} E C^{-1} +\beta^{\gamma}FC^{-1},
$
 and for $\delta >\frac{2\rho \beta^{-\alpha} \eps^{\frac{1}{\gamma}} E C^{-1}}{1-\rho}+\beta^{-\alpha} \eps^{\frac{1}{\gamma}} E C^{-1} +\beta^{\gamma}FC^{-1}$, we get a contradiction.\\
\textbf{Case 2:} Suppose there exists $j$ such that $\mbox{sign}|x_j|\neq \mbox{sign}|\hat x_j|$. Without loss of generality we can assume that $x_j\neq 0$ and $\hat x_j=0$. Note that from the definition of the active set $\mathcal{A}$ and the last equation in \cref{mapphieps} we have
\begin{equation}\label{case2}
|B^{\alpha}_i\hat \lambda_i|\leq \beta^{-\gamma}C \mbox{ if } \hat x_i=0,\quad |B^{\alpha}_i\hat \lambda_i|\leq \beta  \eps^{\frac{1}{\gamma}} E \mbox{ if } \hat x_i\neq 0
\end{equation}
and similarly
\begin{equation}\label{case22}
|B^{\alpha}_i\lambda_i|\leq \beta^{-\gamma}C \mbox{ if }  x_i=0, \quad |B^{\alpha}_i\lambda_i|\leq \beta \eps^{\frac{1}{\gamma}} E \mbox{ if } x_i\neq 0.
\end{equation}
Then by \cref{u1} and \cref{ddc}  we have 
$
B_i^{-\gamma}(x_i-\hat{x}_i)\leq B_i^{\alpha}(\hat\lambda_i- \lambda_i)+\rho |B^{-\gamma}(x-\hat x)|_\infty
$
and by \cref{case2} and \cref{case22}, we get
$$
|B^{-\gamma}(x-\hat x)|_\infty \leq \frac{2(\beta^{-\gamma}C+\beta \eps^{\frac{1}{\gamma}} E)}{1-\rho}.
$$
Then using again \cref{u1} for $j$ chosen as above and proceeding  as in \textbf{Case 1}, we get
\begin{equation}\label{3}
|B_j^{\alpha}(\lambda_j-B_jx_j)|-|B_j^{\alpha}(\hat{\lambda}_j-B_j\hat{x}_j)|\leq \frac{2\rho (\beta^{-\gamma}C+\beta \eps^{\frac{1}{\gamma}} E)}{1-\rho}.
\end{equation}
By the first equation in \cref{case2}, \cref{3}, \cref{cu}, we get
$$
(1+\delta) \beta^{-\gamma}C-\beta^{-\gamma}C\leq \frac{2\rho (\beta^{-\gamma}C+ \beta \eps^{\frac{1}{\gamma}} E)}{1-\rho}
$$
and hence we have a contradiction by taking $\delta > \frac{2\rho}{1-\rho} (1+ \beta^{-\alpha} \eps^{\frac{1}{\gamma}}EC^{-1})$.
The case $\max_{\mathcal{A}(x,\lambda)}|B^{\alpha}(\lambda+Bx)|\leq (1-\delta) \beta^{-\gamma}C$ can be treated analogously.
\end{proof}

\subsubsection{Convergence: Diagonal dominant case}\label{subconv}
Here we give a sufficient condition for the convergence of the primal-dual active set method. Following the ideas of \cite{KI} (in particular Proposition $5.1$), we utilize the diagonal dominance condition \cref{ddc} and consider a solution $x, \lambda$ to \cref{mapphieps} which satisfies the strict complementary condition. As such it is unique according to \cref{uniqueness}. We use the same notation as in  \cref{subuniq}.
\begin{proposition}\label{convddc}
Let $C, E, F$ be defined as in \cref{constants} and $\alpha, \gamma$ as in \cref{ag}. Suppose that \cref{ddc} holds. Let $\bar x, \bar \lambda$ be a solution to \cref{mapphieps} satisfying the strict complementary condition \cref{cu}-\cref{cu2},
for some $\delta$ large enough depending on $\eps,\rho,\beta, \alpha,\gamma, C,E,F$ (see  \cref{delta2}). Then the sets
$$
\mathcal{S}^n=\left\{i \in \mathcal{I}(\bar x, \bar \lambda)\, : \, \lambda^n_i=\frac{\beta p x_i^n}{\max(\eps^{2-p}, |x_i^n|^{2-p})}\right\}, \quad \mathcal{T}^n=\{i \in \mathcal{A}(\bar x, \bar \lambda) \, : \, x^n_i=0\}
$$
are monotonically nondecreasing. As soon as $\mathcal{S}^n=\mathcal{S}^{n+1}$ and $\mathcal{T}^n=\mathcal{T}^{n+1},$ then for some n, we have $(x^n, \lambda^n)=(\bar x, \bar \lambda)$.
\end{proposition}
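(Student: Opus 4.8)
The plan is to mimic the structure of the uniqueness proof (\cref{uniqueness}) and of the corresponding convergence argument in \cite{KI}: reduce everything to a single master perturbation estimate and then run a finite, monotone bookkeeping argument on the index set. Since $\Lambda=I$, both the limit candidate and every iterate satisfy the first line of \cref{mapphieps}, i.e. $Q\bar x+\bar\lambda=A^*b$ and $Qx^{n+1}+\lambda^{n+1}=A^*b$ with $Q=A^*A+\eta P$. Subtracting gives $Q(x^{n+1}-\bar x)+(\lambda^{n+1}-\bar\lambda)=0$. First I would multiply by $B^{\alpha}$, split $Q=B+(Q-B)$ and use $\alpha+1=-\gamma$ from \cref{apiug} to obtain
$$B^{-\gamma}(x^{n+1}-\bar x)+B^{\alpha}(\lambda^{n+1}-\bar\lambda)=B^{\alpha}(B-Q)B^{\gamma}\,B^{-\gamma}(x^{n+1}-\bar x),$$
and then invoke the diagonal dominance condition \cref{ddc} to get the master estimate
$$\left|B^{-\gamma}(x^{n+1}-\bar x)+B^{\alpha}(\lambda^{n+1}-\bar\lambda)\right|_\infty\leq \rho\left|B^{-\gamma}(x^{n+1}-\bar x)\right|_\infty.$$
This is exactly the device behind \cref{u1}, and the uniform bound $|x^n|_2\leq M$ of \cref{boundsol} keeps the resulting constants finite.

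Next I would prove the two monotonicity statements. For $\mathcal{S}^n\subseteq\mathcal{S}^{n+1}$, take $i\in\mathcal{S}^n$, so that $i\in\mathcal{I}(\bar x,\bar\lambda)$ and $\lambda^n_i=\frac{\beta p x^n_i}{\max(\eps^{2-p},|x^n_i|^{2-p})}$; the goal is to show that $i$ is again declared inactive at the next step, i.e. $|B_i x^n_i+\lambda^n_i|>\mu_i$. Writing $\mu_i=\beta^{-\gamma}CB_i^{-\alpha}$ and combining the reverse triangle inequality with the master estimate yields
$$B_i^{\alpha}|B_i x^n_i+\lambda^n_i|\geq |B_i^{\alpha}(B_i\bar x_i+\bar\lambda_i)|-\rho\left|B^{-\gamma}(x^n-\bar x)\right|_\infty,$$
and the strict complementary lower bound \cref{cu} pins the first term at $\geq(1+\delta)\beta^{-\gamma}C$. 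On indices where $x^n_i$ and $\bar x_i$ are both nonzero, controlling $|B^{-\gamma}(x^n-\bar x)|_\infty$ reduces to the Lipschitz/monotonicity behaviour of $s\mapsto\frac{\beta p s}{\max(\eps^{2-p},|s|^{2-p})}$, reproducing the bound of order $\eps^{1/\gamma}E$ from \textbf{Case 1} of \cref{uniqueness}. The symmetric statement $\mathcal{T}^n\subseteq\mathcal{T}^{n+1}$ is handled the same way: for $i\in\mathcal{T}^n$ one has $x^n_i=0$, so $|B_i x^n_i+\lambda^n_i|=|\lambda^n_i|$, and I would use the upper bound \cref{cu2} together with the mismatched-support estimate of \textbf{Case 2} (giving the coarser bound $\frac{2(\beta^{-\gamma}C+\beta\eps^{1/\gamma}E)}{1-\rho}$) to force $|\lambda^n_i|\leq\mu_i$. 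In both cases the conclusion holds provided $\delta$ exceeds a threshold of the form $\bar\delta$ in \cref{delta}, now depending also on $M$.

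Finally I would close the argument by finiteness. Since $\mathcal{S}^n,\mathcal{T}^n$ are nondecreasing subsets of $\{1,\dots,n\}$, they stabilize, and it suffices to treat a step where $\mathcal{S}^n=\mathcal{S}^{n+1}$ and $\mathcal{T}^n=\mathcal{T}^{n+1}$. Here I would argue that no truly inactive index can be misclassified as active at step $n$: the very \textbf{Case 2} computation used for $\mathcal{T}$ shows that such an index would satisfy $|B_i x^n_i+\lambda^n_i|>\mu_i$ and hence enter $\mathcal{S}^{n+1}$, strictly enlarging $\mathcal{S}$ and contradicting $\mathcal{S}^n=\mathcal{S}^{n+1}$; the symmetric reasoning rules out a truly active index misclassified as inactive. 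Therefore $\mathcal{S}^n=\mathcal{I}(\bar x,\bar\lambda)$ and $\mathcal{T}^n=\mathcal{A}(\bar x,\bar\lambda)$, so the active/inactive partition used at step $n$ coincides exactly with the one determined by $\bar x$. Consequently $(x^n,\lambda^n)$ solves \cref{mapphieps} with the same supports as $(\bar x,\bar\lambda)$ and inherits the strict complementary condition, whence $(x^n,\lambda^n)=(\bar x,\bar\lambda)$ by \cref{uniqueness}.

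I expect the main obstacle to be the second paragraph, specifically controlling $|B^{-\gamma}(x^n-\bar x)|_\infty$ uniformly in $n$ while the supports of $x^n$ and $\bar x$ disagree: the clean $\eps^{1/\gamma}E$ bound is only available on matched indices, and any mismatch forces the coarser \textbf{Case 2} bound, so the whole scheme hinges on choosing $\delta$ large enough that the $(1\pm\delta)\beta^{-\gamma}C$ separation of the thresholds survives this coarser perturbation. Pinning down an explicit admissible $\delta$ (the analogue of \cref{delta}, now also involving the a priori bound $M$) and verifying that the correction step in the termination argument is compatible with monotonicity, rather than causing the index sets to oscillate, is the delicate point.
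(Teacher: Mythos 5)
Your overall architecture is the same as the paper's: subtract the stationarity equations $Q(x^n-\bar x)+\lambda^n-\bar\lambda=0$, multiply by $B^{\alpha}$, use \cref{apiug} and the diagonal dominance condition \cref{ddc} to get the master estimate, then prove monotonicity of $\mathcal{S}^n,\mathcal{T}^n$, and finally rule out $\mathcal{S}^n\cup\mathcal{T}^n\subsetneq \mathcal{I}(\bar x,\bar\lambda)\cup\mathcal{A}(\bar x,\bar\lambda)$ at a stabilization step. However, there is a genuine gap at exactly the point you flag as the ``main obstacle,'' and the fix you propose does not close it.

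The gap is in controlling $|B^{-\gamma}(x^n-\bar x)|_\infty$ when the supports mismatch, specifically when $x_i^n=0$ but $\bar x_i\neq 0$. You propose to import the \textbf{Case 2} bound from the uniqueness proof, but that bound is derived from the fact that \emph{both} pairs are solutions of \cref{mapphieps}: for a solution, $\hat x_i=0$ forces $|B_i^{\alpha}\hat\lambda_i|\leq\beta^{-\gamma}C$ via the threshold in the second line of \cref{mapphieps}. An iterate does not satisfy this. When $x_i^n=0$, the multiplier $\lambda_i^n$ is simply whatever the linear solve \cref{eqnumstr1} produces on the active set; nothing in the algorithm caps it by $\mu_i$, so the quantity $|B_i^{\alpha}\lambda_i^n|$ that enters your estimate is a priori uncontrolled, and taking $\delta$ larger cannot compensate for an unbounded term. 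The paper supplies the missing ingredient: it writes the equation for two \emph{consecutive iterates}, $Q(x^n-x^{n-1})+\lambda^n-\lambda^{n-1}=0$, and exploits the classification rule of the algorithm --- $x_i^n=0$ can only happen because $|B_ix_i^{n-1}+\lambda_i^{n-1}|\leq\mu_i$ held at the \emph{previous} step (cf.\ \cref{active1}) --- which, combined with \cref{ddc} and the uniform bound on iterates from \cref{boundsol}, yields $|B_i^{\alpha}\lambda_i^n|\leq 2\rho F+\beta^{-\gamma}C$ (the paper's \cref{c1} and the display after it). Only with this estimate do the three support cases combine into the uniform bound \cref{c5}, on which both the monotonicity step and the termination step rest. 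A secondary consequence of the same gap: your termination argument concludes by asserting that $(x^n,\lambda^n)$ ``inherits the strict complementary condition'' and then invokes \cref{uniqueness}; verifying that inheritance again requires control of $\lambda^n$ on the active set, which you do not have. The paper instead concludes directly that once $\mathcal{S}^n=\mathcal{I}(\bar x,\bar\lambda)$ and $\mathcal{T}^n=\mathcal{A}(\bar x,\bar\lambda)$, the pair is pinned down by \cref{eqnumstr1} with that fixed support structure, avoiding a second appeal to the strict complementarity of the iterate.
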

\begin{remark}\label{delta2}
More specifically, it will be seen from the proof that   $\delta$ has to satisfy
 $
 \delta >\frac{\rho(2 \rho\beta^\gamma FC^{-1}+2\beta^{-\alpha} E\eps^{\frac{1}{\gamma}}C^{-1}+1)}{1-\rho}+3\beta^{-\alpha}E\eps^{\frac{1}{\gamma}}C^{-1}+\bar{\delta}:=\tilde \delta,
 $
 where $\bar{\delta}$ is defined in \cref{delta}.
\end{remark}
\begin{proof}
We divide the proof into three steps. In \textbf{Step} (i) we prove an estimate  which will be used throughout the rest of the proof, in \textbf{Step} (ii) we prove the monotonicity of $\mathcal{S}^n$ and $\mathcal{T}^n$ and finally in \textbf{Step} (iii) we conclude the proof of  convergence.
\begin{step1}
\upshape
We  have
\begin{equation}\label{eqnhat0}
Q(x^n-\bar x)+\lambda^n-\bar \lambda=0.
\end{equation}
Multiplying \cref{eqnhat0} by $B^\alpha$ and using \cref{apiug} we get
\begin{equation}\label{eqnhat}
B^{-\gamma}(x^n-\bar x)+B^{\alpha}(\lambda^n-\bar \lambda)=B^{\alpha}(B-Q)B^{\gamma}B^{-\gamma}(x^n-\bar x).
\end{equation}
We consider separately the cases $x^n_i=0, \bar x_i\neq 0$, and $x^n_i\neq 0, \bar x_i\neq 0$, and $x^n_i\neq 0, \bar x_i=0$. First consider $x^n_i=0, \bar x_i \neq 0$.
For two consecutives iterated we have
$$
Q(x^n-x^{n-1})+\lambda^n-\lambda^{n-1}=0
$$
and thus, multiplying the equation by $B^{\alpha}$ and using \cref{apiug}, we get
\begin{equation}\label{c1}
B^{\alpha}(\lambda^n+Bx^n)-B^{\alpha}(\lambda^{n-1}+Bx^{n-1})=B^{\alpha}(B-Q)B^{\gamma}B^{-\gamma}(x^n-x^{n-1}).
\end{equation}
Since $x^n_i=0$, then $|B_i x_i^{n-1}+\lambda_i^{n-1}|\leq \mu_i=\beta^{-\gamma}CB_i^{-\alpha}$, and  by \cref{c1} and \cref{ddc} we get
\begin{eqnarray*}
|B^{\alpha}_i\lambda^n_i|\leq |[B^{\alpha}(B-Q)B^{\gamma}B^{-\gamma}(x^n-x^{n-1})]_i| \nonumber &+&|B_i^{\alpha}(\lambda_i^{n-1}+B_ix_i^{n-1})| \\ \nonumber &\leq &\rho |B^{-\gamma}(x^n-x^{n-1})|_\infty +\beta^{-\gamma}C\\  &\leq & 2 \rho F +\beta^{-\gamma}C.
\end{eqnarray*}
Since $\bar x_i \neq 0$, by \cref{mapphieps}  we have
$
 |B^{\alpha}_i\bar \lambda_i|\leq \beta \eps^{\frac{1}{\gamma}} E.
$
By the previous estimates, \cref{eqnhat} and  \cref{ddc}, we get
\begin{equation}\label{c2}
|B_i^{-\gamma}(x_i^n-\bar x_i)|\leq 2\rho F + \beta^{-\gamma}C+\beta \eps^{\frac{1}{\gamma}} E+\rho |B^{-\gamma}(x^n-\bar x)|_\infty.
\end{equation}
If $x^n_i\neq 0$ and $\bar x_i\neq 0$, the update rule of the algorithm and \cref{mapphieps} imply
\begin{equation}\label{c3}
|B_i^{-\gamma}(x_i^n-\bar x_i)|\leq 2\beta \eps^{\frac{1}{\gamma}} E+\rho|B^{-\gamma}(x^n-\bar x)|_\infty.
\end{equation}
Similarly if $x^n_i\neq 0$ and $\bar x_i=0$, we get
\begin{equation}\label{c4}
|B_i^{-\gamma}(x_i^n-\bar x_i)|\leq \beta^{-\gamma}C+\beta \eps^{\frac{1}{\gamma}} E +\rho |B^{-\gamma}(x^n-\bar x)|_\infty.
\end{equation}
Then by \cref{c2}, \cref{c3} and \cref{c4}, we have
$$
|B_i^{-\gamma}(x_i^n-\bar x_i)|\leq 2\rho F+\beta^{-\gamma}C +2\beta \eps^{\frac{1}{\gamma}} E+\rho |B^{-\gamma}(x^n-\bar x)|_\infty
$$
and then 
\begin{equation}\label{c5}
|B^{-\gamma}(x^n-\bar x)|_\infty\leq \frac{2 \rho F+\beta^{-\gamma}C+2\beta E\eps^{\frac{1}{\gamma}}}{1-\rho}=\frac{\tilde{A}}{1-\rho},
\end{equation}
where we denote $\tilde{A}=2 \rho F+\beta^{-\gamma}C+2\beta \eps^{\frac{1}{\gamma}}E$.\\
\end{step1}
\begin{step2}
\upshape
We consider \cref{eqnhat} on $\mathcal{S}^n$. By \cref{cu}, \cref{cu2}, \cref{c5} and the definition of $\mathcal{S}^n=\left\{\lambda^n_i=\frac{\beta p x_i^n}{\max(\eps^{2-p}, |x_i^n|^{2-p})}, \bar \lambda_i=\frac{\beta p \bar x_i}{\max(\eps^{2-p}, |\bar x_i|^{2-p})}\right\}$,  we get
$$
\max_{\mathcal{S}^n}|B^{-\gamma}_i(x_i^n-\bar x_i)|\leq 2\beta \eps^{\frac{1}{\gamma}} E+\rho |B^{-\gamma}(x^n-\bar x)|_\infty \leq 2\beta E\eps^{\frac{1}{\gamma}}+\frac{\rho \tilde{A}}{1-\rho}.
$$
For $i \in \mathcal{S}^n$, by the complementary condition  $|B^{\alpha}_i(\bar \lambda_i +B_i\bar x_i)|>(1+\delta)\beta^{-\gamma}C$, \cref{apiug} and the definition of $\mathcal{S}^n$, we have
\begin{equation}\label{h1}
|B^{-\gamma}_i\bar x_i|>(1+\delta)\beta^{-\gamma}C-\beta \eps^{\frac{1}{\gamma}}E.
\end{equation}
Then by \cref{apiug}, \cref{h1} and \cref{c5} we get
\begin{eqnarray*}
|B_i^\alpha(\lambda^n_i+B_ix^n_i)|=|B_i^{\alpha}\lambda_i^n+B_i^{-\gamma}x_i^n|\geq |B_i^{-\gamma}\bar x_i|&-&|B_i^{\alpha}\frac{\beta p x_i^n}{\max(\eps^{2-p}, |x_i^n|^{2-p})}+B_i^{-\gamma}(x_i^n-\bar x_i)|\\&>& (1+\delta)\beta^{-\gamma}C-3\beta \eps^{\frac{1}{\gamma}}E-\frac{\rho \tilde{A}}{1-\rho}.
\end{eqnarray*}
Notice that  by taking $\delta>3\beta^{-\alpha}  \eps^{\frac{1}{\gamma}}EC^{-1}+\frac{\rho \tilde{A}\beta^{\gamma}C^{-1}}{1-\rho}$, we get
$$
|B_i^\alpha(\lambda^n_i+B_ix^n_i)|> \beta^{-\gamma}C.
$$
Then by \cref{inactive1} we have $\lambda_i^{n+1}=\frac{\beta p x_i^{n+1}}{\max(\eps^{2-p}, |x_i^{n+1}|^{2-p})}$, and $i \in \mathcal{S}^{n+1}$ and $\mathcal{S}^n\subseteq \mathcal{S}^{n+1}$ follow.\\
For $i \in \mathcal{T}^n$ by \cref{eqnhat}, \cref{ddc} and \cref{c5} we have
\begin{equation}\label{h2}
|B_i^{\alpha}(\lambda_i^n-\bar \lambda_i)|\leq \rho |B^{-\gamma}(x^n-\bar x)|_\infty\leq \frac{\rho \tilde{A}}{1-\rho},
\end{equation}
and  by the definition of $\mathcal{T}^n$,  the  complementary condition  $|B_i^{\alpha}\bar \lambda_i|\leq (1-\delta)\beta^{-\gamma} C$  and \cref{h2}, we get
$$
|B_i^{\alpha}(\lambda_i^n+B_ix_i^n)|=|B^{\alpha}\lambda_i^n|\leq |B_i^{\alpha}(\lambda_i^n-\bar \lambda_i)|+|B_i^{\alpha}\bar \lambda_i|\leq \frac{\rho \tilde{A}}{1-\rho}+ (1-\delta)\beta^{-\gamma} C< \beta^{-\gamma}C,
$$
where the last inequality holds by taking $\delta> \frac{\rho \tilde{A}\beta^{\gamma}C^{-1}}{1-\rho}$. Then for such $\delta$ we have
$$
|B_i^{\alpha}(\lambda_i^n+B_ix_i^n)|<\beta^{-\gamma}C 
$$
and hence $x_i^{n+1}=0$ and $i \in \mathcal{T}^{n+1}$. Thus $\mathcal{T}^n\subseteq \mathcal{T}^{n+1}$.\\
\end{step2}
\begin{step3}
\upshape
Assume $\mathcal{S}^n=\mathcal{S}^{n+1} \subset \mathcal{I}(\bar x, \bar \lambda)$ and $\mathcal{T}^n=\mathcal{T}^{n+1}\subset \mathcal{A}(\bar x, \bar \lambda)$ and
$$
\mathcal{S}^n\cup \mathcal{T}^n\subsetneq \mathcal{I}(\bar x, \bar \lambda) \cup \mathcal{A}(\bar x, \bar \lambda).
$$
Assume $i \in \mathcal{A}(\bar x, \bar \lambda) \backslash \mathcal{T}^n$. Then
\begin{equation}\label{equationstn1}
x_i^{n+1}\neq 0, \quad x_i^n\neq 0, \quad \bar x_i=0,
\end{equation}
\begin{equation}\label{equationstn2} 
\lambda_i^{n+1}=\frac{\beta p x_i^{n+1}}{\max(\eps^{2-p}, |x_i^{n+1}|^{2-p})}, \quad \lambda_i^{n}=\frac{\beta p x_i^{n}}{\max(\eps^{2-p}, |x_i^{n}|^{2-p})}.
\end{equation}
By \cref{eqnhat}, the last equation in \cref{equationstn2},  the  complementary condition $|B^{\alpha}_i\bar \lambda_i|\leq (1-\delta)\beta^{-\gamma}C$,  \cref{ddc} and \cref{c5}, we get
\begin{equation}\label{c7}
|B_i^{-\gamma}(x_i^n-\bar x_i)|\leq \beta \eps^{\frac{1}{\gamma}} E +(1-\delta)\beta^{-\gamma}C+\rho |B^{-\gamma}(x^n-\bar x)|_\infty  \leq \beta \eps^{\frac{1}{\gamma}}E +(1-\delta)\beta^{-\gamma}C+\frac{\rho \tilde{A}}{1-\rho}.
\end{equation}
The first equation in \cref{equationstn1} and the last in \eqref{equationstn2}, \cref{apiug} and the update rule of the algorithm imply
\begin{equation}\label{c6}
|B^{-\gamma}_ix^n_i|\geq \beta^{-\gamma}C-\beta \eps^{\frac{1}{\gamma}} E, 
\end{equation}
Thus by \cref{c6}, \cref{c7} and the third equation in \cref{equationstn1} we have
$$
\beta^{-\gamma}C-\beta \eps^{\frac{1}{\gamma}}E\leq \beta \eps^{\frac{1}{\gamma}}E+(1-\delta)\beta^{-\gamma}C+\frac{\rho \tilde{A}}{1-\rho}
$$
and  we get a contradiction by taking
$
\delta > \frac{\rho \tilde{A}\beta^{\gamma}C^{-1}}{1-\rho} +2\beta^{-\alpha}\eps^{\frac{1}{\gamma}}EC^{-1}.
$
\\
If $i \in \mathcal{I}(\bar x, \bar \lambda) \backslash \mathcal{S}^n$, we have
\begin{equation}\label{equationssn}
\lambda_i^{n+1}\neq\frac{\beta p x_i^{n+1}}{\max(\eps^{2-p}, |x_i^{n+1}|^{2-p})}, \quad \lambda_i^{n}\neq\frac{\beta p x_i^{n}}{\max(\eps^{2-p}, |x_i^{n}|^{2-p})},
\end{equation}
\begin{equation}\label{equationssn2}
\bar \lambda_i=\frac{\beta p \bar x_i}{\max(\eps^{2-p}, |x_i|^{2-p})},\quad x_i^{n+1}=0, \quad x_i^n=0, \quad \bar x_i\neq 0.
\end{equation}
By the first equation in \cref{equationssn}, the third in \cref{equationssn2} and the update rule of the algorithm, we have
\begin{equation}\label{c9}
|B^{\alpha}_i\lambda^n_i|\leq \beta^{-\gamma}C, 
\end{equation}
and by the strict complementary condition $B_i^\alpha(\bar \lambda_i+B_i\bar x_i)>\beta^{-\gamma}C$, \cref{apiug} and the first equation in \cref{equationssn2}, we get
\begin{equation}\label{c10}
|B^{-\gamma}_i\bar x_i|>(1+\delta)\beta^{-\gamma}C-\beta  \eps^{\frac{1}{\gamma}} E.
\end{equation}
By proceeding as in \cref{c7} and using \cref{c9}, we have
$$
 |B_i^{-\gamma}(x_i^n-\bar x_i)|\leq \beta^{-\gamma}C +\beta \eps^{\frac{1}{\gamma}}E+\frac{\rho \tilde{A}}{1-\rho}
$$
and by  \cref{c10}  we get
$$
(1+\delta)\beta^{-\gamma}C<\beta^{-\gamma}C+2\beta \eps^{\frac{1}{\gamma}}E+\frac{\rho \tilde{A}}{1-\rho},
$$
and we have a contradiction by taking 
$
\delta >\frac{\rho \tilde{A}\beta^{\gamma}C^{-1}}{1-\rho}+2\beta^{-\alpha}\eps^{\frac{1}{\gamma}}EC^{-1}.
$
Then $\mathcal{S}^n=\mathcal{I}(\bar x, \bar \lambda)$. Once the active set structure is determined the unique solution is determined by \cref{eqnumstr1}.
\end{step3}
\end{proof}

 


\section{Active set monotone algorithm: numerical results}\label{numericsD}
Here we describe the active set monotone  scheme (see \textbf{Algorithm $3$}) and discuss the numerical results for two different test cases. The first one is the  time-dependent  control problem from  \cref{controlone}, the second one is an example in microscopy image reconstruction. Typically the active set monotone scheme requires fewer iterations and achieves a lower residue than the monotone scheme of  \cref{sec:optcond}.

\subsection{The numerical scheme}
The proposed active set monotone algorithm consists of an \textbf{outer loop} based on the primal-dual active set strategy and an \textbf{inner loop} which uses the monotone algorithm  to solve the nonlinear part of the optimality condition.\\
In order to achieve a better numerical performance, we write the optimality condition as explained in the following. At each iteration of the active-set strategy (\textbf{Algorithm 2}) we solve  the following system in $x^{n+1}, \lambda^{n+1}$
\begin{equation}\label{optcondd}
\left\{
\begin{array}{lll}
A^*(Ax^{n+1}-b)+\eta Px^{n+1}+\Lambda^*\lambda^{n+1}=0\\
(\Lambda  x^{n+1})_i=0 \quad &\mbox{ if }\,\, i \in \mathcal{A}_n\\
\lambda^{n+1}_i=\frac{\beta p (\Lambda x^{n+1})_i}{\max(\eps^{2-p},|(\Lambda x^{n+1})_i|^{2-p})} \quad & \mbox{ if }\,\, i \in \mathcal{I}_n,
\end{array}
\right.\,
\end{equation}
where $\mathcal{A}_n=\{i \, : \, |B_i y^n_i+\lambda^n_i|\leq \mu_i\}$ are the active indexes and $\mathcal{I}_n=\mathcal{A}_n^c$ are the inactive ones. We write \cref{optcondd} in the following form
\begin{equation}\label{sistemalg0}
\left\{
\begin{array}{lll}
(A^*A+\Lambda_{\mathcal{I}_n}^*N_{\mathcal{I}_n}^{n+1} \Lambda_{\mathcal{I}_n}+\eta P) x^{n+1}+ \Lambda_{\mathcal{A}_n}^*\lambda^{n+1}_{\mathcal{A}_n}=A^*b\\
\Lambda_{\mathcal{A}_n} x^{n+1}=0 
\end{array}
\right.\,
\end{equation}
where  $\Lambda_{\mathcal{A}_n}, \Lambda_{\mathcal{I}_n}$ are the rows of $\Lambda$ corresponding to the active and inactive indexes  and $N^{n+1}_{\mathcal{I}_n}$ is the diagonal operator such that $(N^{n+1}_{\mathcal{I}_n})_{ii, i \in \mathcal{I}_n}=\frac{\beta p}{\max(\eps^{2-p},|(\Lambda x^{n+1})_{i \in \mathcal{I}_n}|^{2-p})}$. \\In order to solve  \cref{sistemalg0} we apply the following iterative procedure which is solved for $x^{k+1,n+1}, \lambda^{k+1,n+1}$
\begin{equation}\label{sistemalg1}
\left\{
\begin{array}{lll}
(A^*A+\Lambda_{\mathcal{I}_n}^*N_{\mathcal{I}_n}^{k,n+1} \Lambda_{\mathcal{I}_n}+\eta P) x^{k+1,n+1}+ \Lambda_{\mathcal{A}_n}^*\lambda^{k+1,n+1}_{\mathcal{A}_n}=A^*b\\
\Lambda_{\mathcal{A}_n} x^{k+1,n+1}=0 
\end{array}
\right.\,
\end{equation}
where $N^{k,n+1}_{\mathcal{I}_n}$ is diagonal with $i$-entries $\frac{\beta p}{\max(\eps^{2-p},|(\Lambda x^{k,n+1})_{i \in \mathcal{I}_n}|^{2-p})}$.

\begin{remark}
Note that the system matrix associated to \cref{sistemalg1}
is symmetric.  
\end{remark}
The algorithm stops when the residue of \cref{sistemalg0} and \cref{optcond} (for the inner and the outer cycle respectively) is $O(10^{-12})$ in the control problem and $O(10^{-8})$ in the microscopy image example. \\
We remark that in our numerical tests we  always took $\eta=0$. The  initialization $x^0,\lambda^0$   in the outer cycle is chosen  in the following way
\begin{equation}\label{initMA}
 x^0=(A^*A+2\beta \Lambda^*\Lambda)^{-1}A^*b, \, \, \lambda^0=\Lambda^{-1} A^*(b-A x_0).
\end{equation}
In particular $\lambda^0$ is the solution of the first equation in \cref{optcond} for $x=x^0$. As in \cref{numericsDeps}, for some values of $\beta$ the previous initialization is not suitable. Following the idea already used for the monotone scheme, we successfully tested an analogous  continuation strategy with respect to increasing $\beta$-values. \\
In \textbf{Algorithm $3$} we jump out of at the inner loop in case of presence of singular components. We recall that the singular components are those $i$ such that $|(\Lambda x)_i|<\eps\}$, that is, the components where the $\eps$-regularization is most influential. \\
\begin{algorithm}[h!]
	\caption{Active set monotone scheme}
	\begin{algorithmic}[1]
		\STATE Initialize ‎$\eps>0, x^0,\lambda^0, y^0=\Lambda x^0$. Set $n=0$.  
		\STATE{\textbf{repeat} \{outer loop\}}
		\STATE Let $\mathcal{A}_n=\{i \,:\, |B_i y^n_i+\lambda^n_i|\leq \mu_i\},$ $\mathcal{I}_n=\mathcal{A}_n^c$. Initialize $x^{0,n+1}=x^n, \lambda^{0,n+1}=\lambda^n$ and $y^{0,n+1}=\Lambda x^{0,n}$. Set $k=0$.
		\STATE{\textbf{repeat} \{inner loop\}}
		\STATE Solve for $x^{k+1,n+1}, \lambda_{\mathcal{A}_n}^{k+1,n+1}$
		\begin{equation}\label{sistemalg}
\left\{
\begin{array}{lll}
(A^*A+\Lambda_{\mathcal{I}_n}^*N_{\mathcal{I}_n}^{k,n+1} \Lambda_{\mathcal{I}_n}+\eta P) x^{k+1,n+1}+ \Lambda_{\mathcal{A}_n}^*\lambda^{k+1,n+1}_{\mathcal{A}_n}=A^*b\\
\Lambda_{\mathcal{A}_n} x^{k+1,n+1}=0 
\end{array}
\right.\,
\end{equation}
	   Set $y^{k+1,n+1}= \Lambda x^{k+1,n+1},  \lambda_{\mathcal{I}_n}^{k+1,n+1}=\frac{\beta p y^{k+1,n+1}_{\mathcal{I}_n}}{\max(\eps^{2-p},|y^{k+1,n+1}_{\mathcal{I}_n}|^{2-p})}$.
	   \STATE If $ y^{k+1,n+1}_{\mathcal{I}_n}$ is a singular point, go to $9$.
		\STATE Set $k=k+1$.
		\STATE{\textbf{until} the stopping criteria for the inner loop are fulfilled.}
		\STATE Set $n=n+1$;
		\STATE{\textbf{until} the stopping criteria for the outer loop are fulfilled.}
		\STATE Reduce $\eps$ and go to $3$.
	\end{algorithmic}
\end{algorithm}

In the case $\Lambda$ coincides with the identity the system \cref{optcondd} can be written as 
\begin{equation}\label{optcondrem}
\left\{
\begin{array}{lll}
x^{n+1}_i=0 \quad &\mbox{ if }\,\, i \in \mathcal{A}_n\\
(A_i,Ax^{n+1}-b)+\eta P_{ij}x^{n+1}_j+\frac{\beta p x^{n+1}_i}{\max(\eps^{2-p},|x^{n+1}_i|^{2-p})}=0 \quad & \mbox{ if }\,\, i \in \mathcal{I}_n.
\end{array}
\right.\,
\end{equation}
Note that in \cref{optcondrem} we coupled the first and the third equation in \cref{optcondd} and we eliminated the dual variable. 
The advantage  is that now we solve the second equation in \cref{optcondrem} only for the inactive components $x_{\mathcal{I}_n}$,  solving a  system of $|\mathcal{I}_n|$ equations, whereas  in   \cref{sistemalg}   we solve $n +|\mathcal{A}_n|$ equations.
Finally we remark that in the case $\Lambda$ coincides with the identity $\eps>0$ is fixed. In particular $\eps=\min_{i}\left(\frac{2 \beta (1-p)}{|A_i|_2^2}\right)^{\frac{1}{2-p}}$ accordingly to the lower bound on the inactive components given by \cref{lowerbound}.\\

\subsection{Sparsity in a time-dependent control problem}\label{control}
We test the active set monotone algorithm on the time-dependent control problem described in  \cref{controlone}, with the same discretization in space  and time ($\Delta x= \Delta t=\frac{1}{50}$) and target function $b$. Also the initialization of $x$ and the $\eps$-range are the same.
 In Tables $4$  we report the results of our tests for $p=.1$ and $\beta$ incrementally increasing by factor of $10$ from $10^{-3}$ to $1$. We report only the values for the second control $u_2$ since the first control $u_1$ is always zero. As expected, $|Du_2|^c_0$ increases and  $|Du_2|^p_p$ decreases when $\beta$ is increasing. Note that the number of iterations of the inner and outer cycle are both  small. \\
The algorithm was also tested for the same $p$ as in \cref{controlone}, that is   $p=.5$, for the same range of $\beta$ as in Table $4$. Comparing to the results achieved by \textbf{Algorithm $1$}, we obtained the same values for the  $\ell^0$-term for corresponding values of $\beta$ and a considerably smaller residue within a significantly fewer number of inner iterations.\\
Finally we note that if $\Lambda=I$ the number of inner iterations is even smaller, that is, $6$ on the average.

\begin{table}[tbhp]
	\caption{Sparsity in a time-dependent control problem,  $ p=.1$, mesh size $h=\frac{1}{50}$. Results obtained by \textbf{Algorithm $3$}.} \label{controltable2} 
	\centering
	\begin{tabular}{|l|c|c|c|c|}
		\hline
		{\bf $\beta$ }   &$10^{-3} $& $10^{-2}$& $10^{-1}$& $1$\\
		\hline
		no. of outer iterates   & 1           &1  &  4    &1\\
		\hline
		no. of inner iterates   &20          & 20  & 30    &20\\
		\hline
		{\bf $|Du_2|^c_0$ }   &95         &95  &98  &100\\
		\hline
		{\bf $|Du_2|^p_p$ }  &18    &17 &14 &   0 \\
		\hline
		$\mbox{ Residue }$ &$10^{-15}$& $10^{-15}$    & $10^{-14}$  &   $10^{-16}$\\
		\hline
	\end{tabular}
\end{table}

\subsection{Compressive sensing approach for microscopy  image reconstruction}\label{mimrec}
In this subsection we present an application of the active set monotone scheme to  compressive sensing for microscopy image reconstruction. 
We focus on the STORM  (stochastic optical reconstruction microscopy) method, which is based on stochastically switching and high-precision detection of single molecules  to achieve an image resolution beyond the diffraction limit. The literature on the STORM has  been intensively increasing, see e.g. \cite{RBZ}, \cite{BPS} \cite{HGM}, \cite{HBZ}.  The STORM  reconstruction process consists in a   series of imaging cycles. In each cycle  only a fraction of the
fluorophores in the field of view are switched on (stochastically), such that each of the active fluorophores is optically resolvable from the rest, allowing the position of these fluorophores to be determined with high accuracy. 
 Despite the advantage of obtaining
sub-diffraction-limit spatial resolution, in these single molecule detection-based techniques such as STORM, the time to acquire a super-resolution image is
limited by the maximum density of fluorescent emitters that can be accurately localized per imaging frame, see e.g. \cite{SGGB}, \cite{JSHZ}, \cite{NLB}. In order to get at the same time better resolution and higher emitter density per imaging frame, compressive sensing methods based on $l^1$  techniques have been recently applied, see e.g. \cite{ZZEH}, \cite{BMYX}, \cite{GSC} and the references therein.   In the following, we propose a similar approach based on our  $l^p$ with  $p<1$ methods.  We mention that  $l^p$ with $0<p\leq 1$ techniques based on a concave-convex regularizing procedure, and hence different from ours, are  used in \cite{KMCUMJY}.

To be more specific, each single frame reconstruction can be achieved  by solving the following constrained-minimization problem: 
\begin{equation}\label{minprobimage}
 \min_{x \in \R^n} |x|^p_{p} \quad \mbox{ such that } \, \,|A x-b|_2 \leq \eps,
\end{equation}
where $p \in (0,1]$, $x$ is the up-sampled, reconstructed image, $b$ is the experimentally observed image, and $A$ is the impulse reponse (of size $m\times n$, where $m$ and $n$ are the numbers of pixels in $b$ and $x$, respectively). $A$ is usually called the  point spread function (PSF) and  describes the response of an imaging system to a point source or point object.   The inequality constraint on the $\ell^2$-norm allows some inaccuracy in the image reconstruction to accommodate the statistical corruption of the image by noise \cite{ZZEH}. Solving problems as \cref{minprobimage} is referred to as compressed sensing in the literature of miscroscopy imaging. Indeed, in the basic compressed sensing problem, an under-determined, sparse signal vector is reconstructed  from a noisy measurement in a basis in which the signal is not sparse.  In the compressed sensing approach to microscopy image reconstruction, the sparse basis is a high resolution grid, in which fluorophore locations are presented,  while the noisy measurement basis is the lower resolution camera pixels, on which fluorescence signal are detected experimentally. In this framework, the optimally reconstructed image is the one that contains the fewest number of fluorophores but reproduces the measured image on the camera to a given accuracy (when convolved with the optical impulse reponse). \\
We  reformulate problem \cref{minprobimage}  as: 
\begin{equation}\label{minprobimagebeta}
\min_{x \in \R^n} \frac{1}{2}|Ax-b|^2_2+\beta |x|^p_p
\end{equation}
and we solve  \cref{minprobimagebeta} by applying  \textbf{Algorithm $3$}.
Note that we may consider \cref{minprobimagebeta} arising from \eqref{minprobimage} with $\beta$ related to the reciprocal of the Lagrange multiplier associated to the inequality constraint $|A x-b|_2 \leq \eps$. \\
%
%
First we tested the procedure for same resolution images, in particular the conventional and the true images are both $128\times 128$ pixel images.  
 Then the algorithm was tested in the case of a $16\times 16$ pixel conventional image and a $128 \times 128$ true image. 
The values for the impulse reponse $A$ and the measured data $b$ were chosen according to the literature, in particular $A$ was taken as the Gaussian PSF matrix with variance $\sigma=8$ and  size $3\times \sigma=24$, and $b$ was simulated by convolving the impulse reponse $A$ with a random $0$-$1$ mask over the image  adding a white random noise so that the signal to noise ratio is  $.01$. \\
 We carried out several tests with the same data for different values of  $p,\beta$. We report only our results for  $p=.1$ and $\beta=10^{-6}, \beta=10^{-9}$ for the same and the  different resolution case respectively, since for these values the best reconstructions were achieved. The number of single frame reconstructions carried out to get the full reconstruction was $5, 10$ for the same, different resolution case, respectively. \\
In order to measure the performance of our algorithm, we plot a graphic of the average over six recoveries of the location recovery and the exact recovery (up to a certain tolerance) against the noise. Note that in  compressed sensing  these quantities are typically used as a measure of the efficacy of the reconstruction method, see for example \cite{DP} (where, under certain conditions, a linear decay with respect to the noise is proven) and \cite{C}. \\
The first test is carried out for a sparse $0$-$1$ cross-like image.   The STORM reconstructions are presented in Figures $5,6$   for the same and different resolution case, respectively. In Figures $7$ the plots of the location and exact recovery  are shown in the case of different resolution. Similar plots are obtained in the same resolution case.
Note that our algorithm can recover quite well the location of the emitters. Also, the location and intensity of the emitters decay linearly with respect to the noise level, in line with the result of \cite{DP}. In particular, for small  noise both the  recoveries are very near to $n^2=16384$, that is, the exact recovery is $16240, 16243$ and the location  is $16384, 16360$ for the same and the different resolution case, respectively. We observe also that the values of the location recovery are higher than the exact recovery for small values of the noise, as expected. \\
A second test on a non sparse standard phantom image is carried out. In Figure $8$ we show the reconstruction in the case of same resolution images. Note that a high percentage of emitters is correctly localized and the boundaries of the image are well-recovered. Also in this case the location and exact recoveries show a linear decay with respect to the noise.\\
In Tables $5,6$ we report the number of iterations needed for each single frame reconstruction.  For the cross image in the different resolution case (Table $5$),  the number of iterations is averagely $100, 164$ for the outer cycle and inner cycle, respectively. Note that for the phantom in the same resolution case (Table $6$) the number of iterations is lower, that is averagely $7.2, 9.8$ for the outer cycle and inner cycle, respectively. The numbers of iterations for the cross image in case of same resolution  are comparable to the ones of Table $5$. As shown in the third line of each tables, the residue is always less than or equal to $10^{-8}$.\\
We compared our results with the ones obtained by the FISTA in the same situations and same values of the parameters as described above. Figure $9$ shows a comparison between   the number of surplus and missed emitters recovered (Error+, Error- respectively)  by \textbf{Algorithm $3$} and the FISTA in the case of the cross image and different resolution.
We remark that the levels of the location and exact recoveries achieved by the FISTA are lower than the ones obtained by \textbf{Algorithm $3$}, at least for values of the noise near $.01$. In particular, by the FISTA the Error+  is always above $410$, whereas by \textbf{Algorithm $3$} is zero for small value of the noise.   On the other hand, FISTA is faster than our algorithm (as expected, since our algorithm solves a nonlinear equation for each minimization problem.)  
\begin{figure}[h!]
	\centering
	\subfloat[\tiny{Real distribution}]{
		\includegraphics[width=0.35\textwidth]{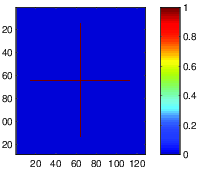}}
		\hspace{0.2cm}
		\subfloat[\tiny{Simulated single frame image}]{\includegraphics[width=0.36\textwidth]{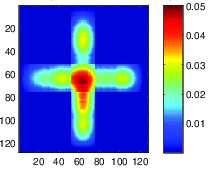}}
		\hspace{0.2cm}
		\subfloat[\tiny{Single frame sparse reconstruction}]{\includegraphics[width=0.35\textwidth]{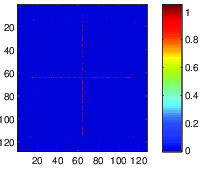}}
		\hspace{0.2cm}
		\subfloat[\tiny{Full STORM sparse reconstruction}]{\includegraphics[width=0.35\textwidth]{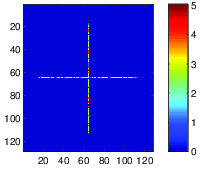}}
		\hspace{0.2cm}
		\caption{A STORM reconstruction procedure, same resolution, $p=.1, \beta=10^{-6}$. Results obtained by \textbf{Algorithm $3$}.}
\end{figure}
\vspace{1cm}


		\begin{figure}[!htbp]
	\centering
	\subfloat[\tiny{Real distribution}]{
		\includegraphics[width=0.35\textwidth]{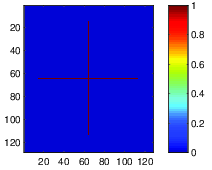}}
		\hspace{0.2cm}
		\subfloat[\tiny{Simulated single frame image}]{\includegraphics[width=0.36\textwidth]{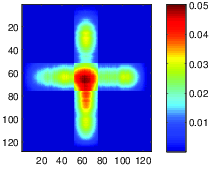}}
		\hspace{0.2cm}
		\subfloat[\tiny{Single frame sparse reconstruction}]{\includegraphics[width=0.35\textwidth]{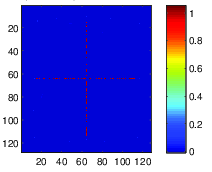}}
		\hspace{0.2cm}
		\subfloat[\tiny{Full STORM sparse reconstruction}]{\includegraphics[width=0.35\textwidth]{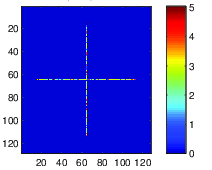}}
		\hspace{0.2cm}
		\caption{A STORM reconstruction from a $16x16$ pixel image, different resolution, $p=.1, \beta=10^{-9}$. Results obtained by \textbf{Algorithm $3$}.}
\end{figure}



\begin{figure}[h!]
\centering
\subfloat
{
\includegraphics[height=4.5cm, width=4.5cm]{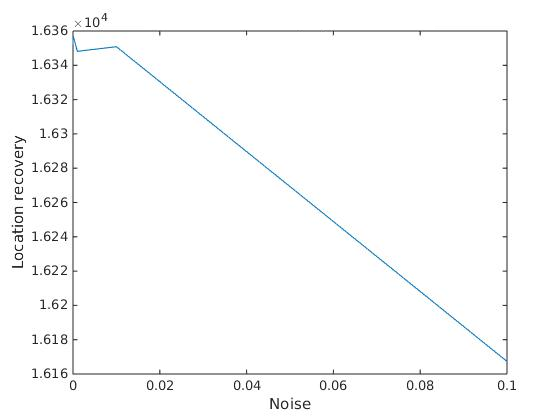}
}
\hspace{0.6cm}
\subfloat
{
\includegraphics[height=4.5cm, width=4.5cm]{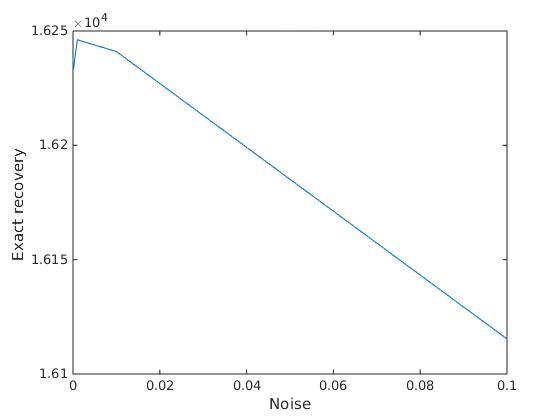}
}
\caption{Left: location recovery. Right: exact recovery. Cross image, different resolution, $p=.1, \beta=10^{-9}$.  Results obtained by \textbf{Algorithm $3$}.
\vspace{1.5cm}}
\end{figure}


\begin{figure}[h!]\label{figim3}
	\centering
	\subfloat[\tiny{Real distribution}]{
		\includegraphics[width=0.35\textwidth]{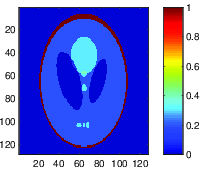}}
		\hspace{0.2cm}
		\subfloat[\tiny{Simulated single frame image}]{\includegraphics[width=0.35\textwidth]{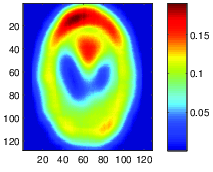}}
		\hspace{0.2cm}
		\subfloat[\tiny{Single frame sparse reconstruction}]{\includegraphics[width=0.35\textwidth]{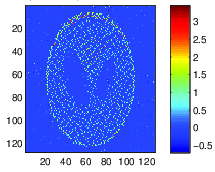}}
		\hspace{0.2cm}
		\subfloat[\tiny{Full STORM sparse reconstruction}]{\includegraphics[width=0.35\textwidth]{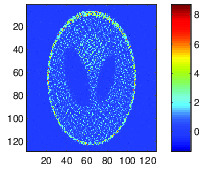}}
		\hspace{0.2cm}
		\caption{A STORM reconstruction procedure, same resolution, $p=.1, \beta=10^{-6}$. Results obtained by \textbf{Algorithm $3$}.}
\end{figure}
\begin{figure}[h!]
\centering
\subfloat[\tiny{$p=.1, \beta=10^{-6}$ by \textbf{Algorithm $3$}}]{
\includegraphics[height=5.5cm, width=5.5cm]{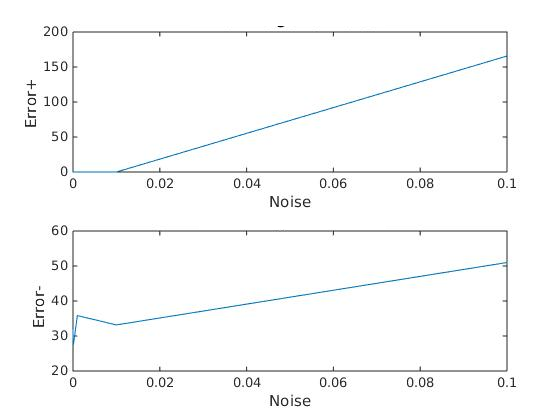}
}
\subfloat[\tiny{ $p=.1, \beta=10^{-4}$ by FISTA}]{
\includegraphics[height=5.5cm, width=5.5cm]{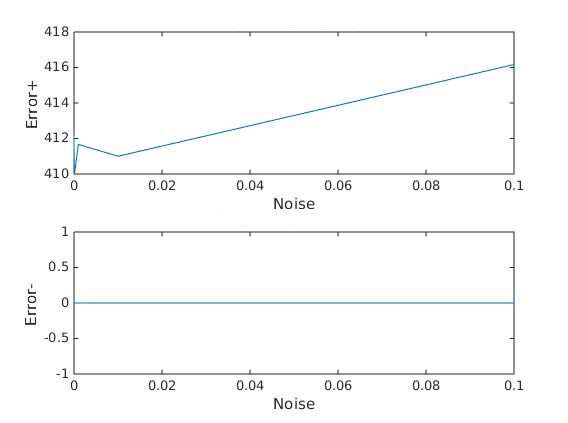}
}
\caption{Graphics of Error+ (surplus of emettitors), Error- (missed emettitors) against noise.}
\end{figure}


	\begin{table}[h!]
	\caption{Number of outer and inner iterations (ItOut, ItIn)  and residue (Res) for eache single frame (Fr). Cross image with different resolution,  $p=.1, \beta=10^{-9}$. Results obtained by \textbf{Algorithm $3$}.} \label{itdres} 
	\begin{tabular}{|l|c|c|c|c|c|c|c|c|c|c|}
	\hline\noalign{\smallskip}
		Fr &1&2&3&4&5&6&7&8&9&10\\
		\noalign{\smallskip}\hline\noalign{\smallskip}
		ItOut    &100   & 98&100&100&100&100&100&85&100&100   \\
		ItIn       &147     & 190&144&184&145&186&146&187&145&165  \\
		Res &$10^{-8}$ &$10^{-8}$ &$10^{-8}$&$10^{-8}$&$10^{-9}$&$10^{-8}$&$10^{-8}$&$10^{-8}$&$10^{-8}$&$10^{-8}$\\
		\noalign{\smallskip}\hline
	\end{tabular}
\end{table}

\begin{table}[h!]
\caption{Number of outer and inner iterations (ItOut, Itin) and residue (Res) for each single frame (Fr). Phantom image with same resolution,  $p=.1, \beta=10^{-6}$. Results obtained by \textbf{Algorithm $3$}.} \label{itdres} 
	\begin{tabular}{|l|c|c|c|c|c|}
		\hline\noalign{\smallskip}
		Fr &1&2&3&4&5\\
		\noalign{\smallskip}\hline\noalign{\smallskip}
		ItOut       &6&11&7&6&6      \\
		ItIn      &9&14&12&7&7     \\
		Res & $10^{-8}$& $10^{-10}$ &$10^{-12}$ &$10^{-8}$&$10^{-8}$\\
		\noalign{\smallskip}\hline
	\end{tabular}
\end{table}

\end{document}